\title{Implicit Commitment in a General Setting}
\author{Mateusz \L e\l yk and Carlo Nicolai}
\date{}
\begin{document}

\maketitle

% \begin{itemize}
%     \item Is $\mathtt{RFN}$ natural outside arithmetic? The cost of making it natural may be to move to stronger setting: e.g.~for $\tau:= \mathtt{ZFC}$ and $\mathtt{EA}\mapsto \mathtt{KP\omega}$ at least.
%     \item Other implicit commitments may be more amenable to generalizations, e.g.~consistency: work in an interpretation $N : \mathtt{EA}\to T$, and use axiomatization by consistency. Local reflection? Should be good because the objects you work with live in $N$. In effect, in the previous paper this result is obtain where $N$ is the identity interpretation and yet we get something stronger (because of the numeral function or analogous devices). 
%     \item Iterations. 
%     %\item More conceptually, we should make sure that the collection of conceptions that fall under 
% \end{itemize}

\begin{abstract}
G\"odel’s Incompleteness Theorems suggest that no single formal system can capture the
entirety of one’s mathematical beliefs, while pointing at a hierarchy of systems of increasing
logical strength that make progressively more explicit those implicit assumptions. This
notion of implicit commitment motivates directly or indirectly several research programmes
in logic and the foundations of mathematics; yet there hasn’t been a direct logical analysis of
the notion of implicit commitment itself. In a recent paper, we carried out an
initial assessment of this project by studying necessary conditions for implicit commitments;
from seemingly weak assumptions on implicit commitments of an arithmetical system $S$,
it can be derived that a uniform reflection principle for $S$ -- stating that all numerical
instances of theorems of $S$ are true -- must be contained in $S$’s implicit commitments. This
study gave rise to unexplored research avenues and open questions. This paper addresses
the main ones. We generalize this basic framework for implicit commitments along two
dimensions: in terms of iterations of the basic implicit commitment operator, and via a study
of implicit commitments of theories in arbitrary first-order languages, not only couched in
an arithmetical language.
\end{abstract}

\section{Introduction}

As forcefully argued by Solomon Feferman, G\"odel's Incompleteness Theorems suggest that no single formal system can capture the entirety of one's mathematical beliefs, while pointing at a hierarchy of systems of increasing logical strength that make progressively more explicit those \emph{implicit} assumptions.\footnote{See for instance, \cite[p.1]{fef91}.} This notion of \emph{implicit commitment} motivates directly or indirectly several research programmes in logic and the foundations of mathematics. To mention just a few, Turing's ordinal logics \cite{tur39} anticipated  Feferman's foundations of predicative mathematics given the natural numbers -- i.e.~charaterized as the portion of mathematical thought implicit in the acceptance of the natural numbers --  via iterations of recursive systems along autonomous ordinals \cite{fef62,fef64}. Feferman later tried to simplify the presentation of these predicatively acceptable ordinals via a notion of truth \cite{fef91}, and via his framework for explicit mathematics.\footnote{A comprehensive bibliography on explicit mathematics can be found at  https://home.inf.unibe.ch//$\sim$til/em{\textunderscore}bibliography/.} The study of reflection principles and theories of truth led to a proliferation of studies in proof-theoretic ordinal analysis \cite{sch79,fra04,bek15,bepa19}, and techniques in theories of truth \cite{jal99,fuj12,hole17,finiho17,cie17}.\footnote{More philosophical works have recently tackled the notion of implicit commitment directly. \cite{dea14} and \cite{nipi18} are some examples.}

Despite this interest in frameworks that are motivated by implicit commitments, there hasn't been a direct logical analysis of the notion of implicit commitment itself. \cite{leni22} carried out an initial assessment of this project by studying minimal formal components of implicit commitments; from seemingly weak assumptions on implicit commitments of an arithmetical system $S$, it can be derived that a uniform reflection principle for $S$ -- stating that all numerical instances of theorems of $S$ are true -- must be contained in $S$'s implicit commitments. These assumptions are (i) that (provably) logically equivalent theories have identical commitments (\textsc{invariance}),\footnote{More precisely,in \cite{leni22} the proof-theoretical equivalence in question amounts proof-transformations witnessed by elementary functions. We weaken this assumption to p-time reducibility. } and (ii) that if, provably in a very weak metatheory, all numeral instances $A$ are axioms of $S$, then $\forall x A$ is in the commitments of $S$ (\textsc{reflection)}.  This work is reviewed in Section  \ref{sec:basic}. 

The framework in \cite{leni22} gave rise to unexplored research avenues and open questions. This paper addresses the main ones. It focuses on generalizations of the basic framework for implicit commitments along two dimensions: in terms of iterations of the basic implicit commitment operator (Section \ref{sec:iterat}), and via a study of implicit commitments of theories in arbitrary first-order languages, not only couched in an arithmetical language. We study iterations of the basic operator $\mathcal{J}$ along an elementary presentation of an ordinal number, which permits us to uniformly justify the transfinite iterations of uniform reflection (Section \ref{sec:iterat}). To carry out the second kind of generalization, we study different options; after dismissing a na\"ive approach based on the addition of ``domain constants'' (Section \ref{sec:domcon}), we discuss several alternative generalizations of \textsc{invariance} based on strengthenings of mutual relative interpretability. Our preferred generalization is inspired by the principle that bi-interpretable theories have bi-interpretable commitments.\footnote{Bi-interpretability is a strong notion of mutual reduction, much stricter than mutual interpretability; it will be introduced in Section \ref{sec:thre}.} After combining this principle with a suitable generalization of \textsc{reflection} , we show that the main properties of the basic framework carry over to generalized implicit commitments (Section \ref{sec:uniref}): given a theory $S$, a suitable form of uniform reflection for $S$ is bound to be in the implicit commitments of $S$, and that this form of uniform reflection amounts to a natural interpretation of the generalized implicit commitment operator.

\section{Technical Preliminaries}\label{sec:prelim}

\subsection{Theories and Coding} 
Our main notion of proof-theoretic reduction will be the notion of \emph{p-time reducibility}. We assume familiarity with Buss' theory $\sot$, as given for instance in \cite{bus86} and \cite{hapu98}. We call $\lnat$ the standard signature of arithmetic extended with function symbols to develop $\sot$. The formalization of syntactic notions and operations for a standard formal system in $\sot$ is based on the fact that such notions can be coded by p-time functions and predicates. The functions that are provably total in $\sot$ are in fact precisely the p-time computable functions.\footnote{In effect, $\sot$ stands to the p-time functions as $\mathtt{I}\Sigma_1$ stands to the primitive recursive functions.}

We provide an informal development of such a coding to justify the choice of $\sot$. It suffices to code strings of symbols coming from a finite alphabet $\{a,b\}$. Strings are arranged in length-first, alphabetic order (or shortlex order); assuming the empty string $\epsilon$, one first lists strings of length $0$, then of length $1$, and so on, where strings of length $n$ are ordered alphabetically. The code of a string is the ordinal number of a string in such a list. There are $2^n$ strings of length $n$; therefore, the number of strings of length $\leq n$ is
\[
    2^n+2^{n-1}+\ldots+ 1= 2^{n+1}-1. 
\]
The code $s$ of a string of length $n$ will be $2^{n+1}-1$, and therefore it is $\mc{O}(2^n)$ big. Symmetrically, for $s$ coding a string, its length $|s|$ is $\mc{O}(\mathtt{log}_2 s)$. We estimate the growth on codes of the fundamental operations of \emph{concatenation} and \emph{substitution}. Concatenation of codes $s_1\smf s_2$ has the same growth rate as multiplication: since $|s_1\smf s_2|=|s_1|+|s_2|$, the code $s_1\smf s_2$ will be $\mc{O}(2^{|s_1|+|s_2|})$-big, so it is estimated to be 
\[
\mc{O}(2^{|s_1|+|s_2|})=\mc{O}(s_1\times s_2).
\]
For substitution of codes $s(t/x)$,\footnote{Here we are assuming that the expression  $s(t/x)$ stands for the code of the result of substituting, in the string coded by $s$, all occurrences of the string coded by $x$ with the string coded by $t$.} the worst case scenario is that $s$ ``amounts to'' $|s|$-many $x$s. In such a scenario, $|s(t/x)|=|t|\times |s|$ and therefore $s(t/v)$ is estimated to be
\[
    \mc{O}(2^{|s|\times |t|})=\mc{O}(s^{|t|}). 
\]
This logarithmic growth on codes is precisely what p-time recursion, and therefore $\sot$, can accommodate. An important caveat concerns the formalization of the function that sends a number to the code of its corresponding numeral. If numerals are formalized as 
\[
    \underbrace{\mathtt{S}\ldots\mathtt{S}}_{\text{$n$-many}}\ovl{0},
\]
then the code of the numeral for $n$ will be $\mc{O}(2^n)$-big, and therefore exponential. Therefore, \emph{dyadic numerals} are adopted:
\begin{align*}
    &\ovl{0}=0&& 
    &&\ovl{2n+1}=\mathtt{S}((\ovl{1}+\ovl{1})\times \ovl{n})&& \ovl{2n+2}=(\ovl{1}+\ovl{1})\times \ovl{n+1}.
\end{align*}

In bounded arithmetical theories such as $\sot$ a central role is played by the \emph{bounded hierarchy} for $\Sigma_n^b$-formulae, which parallels the arithmetical hierarchy except that at the bottom of the scale $\Sigma_0^b$ one has formulae containing only quantification bounded by terms of form $|t|$. The hierarchy then keeps track of alternating quantifiers bounded by ordinary terms.\footnote{See for instance \cite[\S V, Def.~4.2]{hapu98}.} 

The bounded hierarchy is used to define our notion of a \emph{theory}; in what follows a theory is a  $\Delta_0^b$-formula $\tau(x)$ such that 
\[
    \sot \vdash \forall x (\tau(x)\ra \mathtt{Sent}_{\lnat}(x)).
\]
A theory $\tau$ is \emph{schematic} if there is a first order formula $\vphi(P)$ with a free second order variable $P$
such that $\tau(x)$ says:
\begin{quote}
There is a formula $\psi(y)$ such that $x$ is the result of replacing $P(y)$ with $\psi(y)$ in $\vphi$.\footnote{It is implicit in the definition that $\psi(y)$ may contain additional free variables (parameters) and that the substitution in $\vphi$ is well-defined in the sense that none of the parameters is captured by $\vphi$-quantifiers.}
\end{quote}
It's important to notice that, according to this definition,  Reflection Principles such as $\mathtt{RFN}(\tau)$ -- see below for a definition -- do not conform to it , because the formula $\vphi(v)$ over which reflection is performed is both used and
mentioned (via the Gödel code of $\vphi(v)$) in its instances.

Finally, $\sot$ is finitely axiomatizable and sequential in the sense of \cite{pud85}. The finite axiomatizability of $\sot$ differentiates it from other analogous theories that are able to cope with functions of polynomial growth rate such as $\mathtt{I}\Delta_0+\Omega_1$, where $\Omega_1$ is the totality of the function $x^{|x|}$; It is not known whether $\mathtt{I}\Delta_0+\Omega_1$ is finitely axiomatized.

\subsection{Inter-theoretic Reductions.}\label{sec:thre}

A prominent role in the paper will be played by \emph{p-time reducibility}.  
Let $\tau$ and $\tau'$ be formulated in the same signature $\Sigma$. We say that $\tau$ is p-time-reducible to $\tau'$ -- in symbols, $\tau \leq_{\mathtt{pt}}\tau'$ -- if there is a p-time function $f$ such that 
 \[
    \sot \vdash (\forall \vphi \in\mc{L}_\Sigma)( \mathtt{Proof}_{\tau}(y,\vphi)\ra \mathtt{Proof}_{\tau'}(f(y),\vphi)).
 \]

 To extend the results in \cite{leni22}, we would like to compare the implicit commitments of theories formulated in different signatures. We will employ the notion of a \emph{relative interpretation}. 
%
%Interpretability
 Let $\Sigma$ and $\Xi$ be one-sorted, first-order, finite, relational signatures. A (one-sorted, one-dimensional, parameter-free) relative translation $\tau\colon \Sigma \to \Xi$ can be seen as a pair $(\delta,F)$, where $\delta$ is a unary formula of $\mc{L}_\Xi$, and $F$ a function that sends $n$-ary relation symbols in $\Sigma$ to $\mc{L}_\Xi$-formulae with $n$ free variables.\footnote{It is assumed a machinery to rename variables to avoid clashes.} The translation $\tau$ then commutes with propositional connectives and relativizes quantifiers to $\delta$:
 \begin{align*}
    (R(x_1,\ldots, x_n))^\tau &:\lra F(R)(x_1,\ldots,x_n)\\
    (\neg A)^\tau &:\lra \neg A^\tau\\
    (A\land B)^\tau &:\lra A^\tau \land B^\tau\\
    (\forall x A)^\tau &:\lra \forall x(\delta(x)\ra A^\tau)
 \end{align*}
We occasionally abbreviate $\forall x(\delta_N(x)\ra \ldots $ with $\forall x\coltyp N \ldots$\,. Let $T$ and $W$ be $\Sigma$ and $\Xi$ theories, respectively. The relative translation $\tau$ gives rise to a \emph{relative interpretation} whenever 
\[
\text{$T\vdash A$ only if $W\vdash A^\tau$}.
\] We say that $\tau$ is a faithful interpretation of $T$ in $W$ if the stronger claim
\[
\text{$T\vdash A$ if and only if $W\vdash A^\tau$}
\]
obtains.
We will not distinguish between translations and the interpretations supported by them in what follows.

%Definitional Equivalence
We will consider two prominent notions of equivalence of theories resulting from corresponding notions of equivalence between interpretations. Two interpretations $\tau,\sigma\colon T\to W$, are \emph{identical} iff:
\begin{align*}
    & W \vdash \delta_\tau(x)\lra \delta_\sigma(x)\\
    &W\vdash \bigwedge_{1\leq i \leq n}\delta_\tau(x_i)\ra (R^\tau(x_1,\ldots,x_n)\lra R^\sigma(x_1,\ldots,x_n))&&\text{for any $R\in \mc{L}_T$}.
\end{align*}
$T$ and $W$ are \emph{definitionally equivalent} if there are $\tau\colon T\to W$ and $\xi \colon W \to T$ such that $T$ proves that $\xi \circ \tau$ is identical to the identity interpretation on $T$, and $W$ proves that $\tau \circ \xi$ and the identity on $W$ are identical. 

%Bi-interpretability
As to the second notion of equivalence of interpretations, given two interpretations $\tau,\sigma\colon T\to W$, a $W$-definable \emph{isomorphism} between $\tau$ and $\sigma$ is a $\mc{L}_W$-formula $I(x,y)$ such that $W$ proves the following:
\begin{align}
    & I(x,y)\ra \delta_\tau(x)\land \delta_\sigma(y)\\
    & \forall x (\delta_\tau(x) \ra \exists y(\delta_\sigma(y)\land I(x,y)))\\
    & \forall y(\delta_\sigma(y) \ra \exists x (\delta_\tau(x)\land I(x,y)))\\
    & I(x,y)\land x=^\tau u \land y=^\sigma v \ra I(u,v)\\
    & I(x,y)\land I(x,v)\ra y=^\sigma v\\
    &I(x,y)\land I(u,y)\ra x=^\tau u\\
    & \bigwedge_{1\leq i\leq n} I(x_i,y_i)\ra ( R^\tau (x_1,\ldots,x_n)\lra R^\sigma(y_1,\ldots,y_n))&&\text{for any $R\in \mc{L}_T$}
\end{align}
The theories $W$ and $T$ are \emph{bi-interpretable} if there are interpretations $\tau \colon T\to W$ and $\sigma\colon W \to T$ such that $T$ proves that there is a $T$-definable isomorphism between $\sigma\circ \tau$ and the identity interpretation on $T$, and $W$ proves that there is a $W$-definable isomorphism between the identity on $W$ and $\tau\circ \sigma$. 

It will be useful in what follows to separate the two conditions at work in the definition of bi-interpretability. We say that $T$ is a \emph{retract} of $W$ if there are interpretations $\tau \colon T\to W$ and $\sigma\colon W \to T$ such that $T$ proves that there is a $T$-definable isomorphism between $\sigma\circ \tau$ and the identity interpretation on $T$.

\subsection{Proof-Theoretic Reflection Principles.} We recall some basic facts concerning formal provability and soundness for the logical systems we are interested in.  Given a canonical, $\exists\Delta_1^b$-provability predicate 
\[
\mathtt{Prov}_\tau(x):\lra \exists p\, \mathtt{Proof}(p,x),\] a \emph{consistency statement} for $\tau$ is the $\forall \Delta_1^b$-formula
\[
    \mathtt{Con}(\tau):\lra \neg \mathtt{Prov}_\tau(\corn{0=1}).
\]
It can be shown in $\sot$ that $\mathtt{Con}(\tau)$ is  equivalent to local and uniform reflection restricted to $\forall \Delta_1^b$-formulae.\footnote{Where $\forall\Delta_1^b$ is a class of formulae provably equivalent to a list of unbounded universal quantifiers in front of a $\Delta_1^b$-formula.} A restricted consistency statement $\mathtt{Con}(\ovl{n},\tau)$ rules out proofs of $0=1$ smaller than $n$. 

The \emph{local reflection principle} is the collection
\[
    \tag{$\mathtt{Rfn}(\tau)$} \{ \mathtt{Rfn}(\corn{\vphi})\ra \vphi\sth \vphi\in \mathtt{Sent}_{\mc{L}_\tau}\}
\]
For arithmetical $\tau$, the \emph{Uniform Reflection Principle} is the collection 
\[
    \tag{$\mathtt{RFN}(\tau)$} \{\forall x (\mathtt{Prov}_\tau(\corn{\vphi(\dot{x})})\ra \vphi(x))\sth \vphi \in \mathtt{Form}_{\mc{L}_\tau}\},
\]
where the expression $\corn{\vphi(\dot{x})}$ stands for the result of formally substituting, in $\corn{\vphi(v)}$, the variable $v$ for the dyadic numeral for $x$.
Clearly, all members of  $\mathtt{Rfn}(\tau)$ are also members of $\mathtt{RFN}(\tau)$. It is well-known that, over a weak metatheory such as $\ea$, $\mathtt{RFN}(\tau)$ is properly stronger than $\mathtt{Rfn}(\tau)$.\footnote{See for instance, \cite{bek05}.}

Since $\mathtt{RFN}(\tau)$ can be elementarily presented in a natural way, this opens up a possibility of iterating the process of adding uniform reflection. Formally, for an arbitrary $\tau$, we define

\begin{align*}
    \mathtt{RFN}^0(\tau)&:= \tau\\
    \mathtt{RFN}^{\alpha+1}(\tau)&:=\tau+ \mathtt{RFN}(\mathtt{RFN}^{\alpha}(\tau))\\
    \mathtt{RFN}^{\alpha}(\tau)&:= \bigcup_{\beta<\alpha}\mathtt{RFN}^{\beta}(\tau). 
\end{align*}
\cite[Section 5]{bek18} describes the standard way of formalizing this progression in theories extending $\ea$.

\subsection{{Axiomatic Theories of Truth}.}\label{sec:atot} We will refer to axiomatic theories of truth in some key examples below. These are logical systems that, due to the undefinability of truth, are formulated in a language $\lnat \cup \{\T\}$, for $\T$ a unary predicate.\footnote{For an overview of the systems, see \cite{cie17} and \cite{hal14}.} 

% The axiomatic theory of truth $\mathtt{CT}^-[\tau]$,  is obtained by extending $\tau$ with the axioms 
% \begin{align}
%     \tag{$\mathtt{CT}\!P$} &\forall t_1\ldots t_n(\T\corn{P(t_1\ldots t_n)}\lra P(\mathtt{val}(t_1),\ldots,\mathtt{val}(t_n)))\\[1ex]
%     \tag{$\mathtt{CT}\neg$}& \forall \vphi(\T(\neg \vphi)\lra \neg \T \vphi)\\[1ex]
%     \tag{$\mathtt{CT}\land$}& \forall \vphi,\psi(\T(\vphi\land \psi)\lra (\T\vphi\land \T\psi))\\[1ex]
%     \tag{$\mathtt{CT}\forall$}& \forall \vphi(v) (\T(\forall v\vphi)\lra \forall t\,\T\vphi(t/v))
% \end{align}
We assume some $N\colon \sot \to \tau$. The theory $\mathtt{UTB}^-[\tau]$ is obtained by relativizing coding and syntax to $N$, and by extending $\tau$ in $\lt$ with the schema
    \[
    \tag{$\mathtt{UTB}$} (\forall x:N)(\T\corn{A(\dot{x})}\lra A(x)).
    \]

    The theory $\mathtt{SC}[\tau]$ extends $\mathtt{UTB}^-[\tau]$ with the \emph{schema}
    \[
        \tag{$\tau \subseteq \T$} (\forall x:N)(\tau(\corn{\vphi(\dot{x})})\ra \T\corn{\vphi(\dot{x})})
    \]
    for \emph{each} axiom $\vphi$ of $\tau$. We will employ the following:
    \begin{lemma}\label{lem:sccons}
        For schematic $\tau$, $\mathtt{SC}[\tau]$ is conservative over $\tau$.
    \end{lemma}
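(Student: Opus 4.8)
The plan is to deduce the conservativity of $\mathtt{SC}[\tau]$ over $\tau$ from the conservativity of $\mathtt{UTB}^-[\tau]$ over $\tau$, by first showing that for schematic $\tau$ the soundness schema $\tau\subseteq\T$ is \emph{redundant} over $\mathtt{UTB}^-[\tau]$. Concretely, I would argue that every instance of $\tau\subseteq\T$ is already provable in $\mathtt{UTB}^-[\tau]$, so that the two theories are deductively equivalent; conservativity then transfers from the purely disquotational theory to $\mathtt{SC}[\tau]$. Throughout I assume that the ``$-$'' in $\mathtt{UTB}^-[\tau]$ signals that the schemata of $\tau$ are \emph{not} extended to the enlarged language $\lt$, which is exactly what makes the partial-truth method below applicable.

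For the redundancy step, fix an axiom $\chi(v)$ of $\tau$. By schematicity it has the form $\Phi[\psi/P]$, obtained by substituting some $\psi$ for $P$ in the template $\Phi(P)$ witnessing that $\tau$ is schematic, with $v$ a parameter occurring in $\psi$. Since $\chi(v)$ is itself a substitution instance of the template, it is an axiom of $\tau$ with $v$ free, whence by generalisation $\tau\vdash\forall v\,\chi(v)$, and in particular $\mathtt{UTB}^-[\tau]\vdash(\forall x:N)\,\chi(x)$. The relevant instance of the $\mathtt{UTB}$ schema gives $\mathtt{UTB}^-[\tau]\vdash(\forall x:N)(\T\corn{\chi(\dot x)}\lra\chi(x))$, and combining the two yields $\mathtt{UTB}^-[\tau]\vdash(\forall x:N)\,\T\corn{\chi(\dot x)}$, which entails the corresponding instance $(\forall x:N)(\tau(\corn{\chi(\dot x)})\ra\T\corn{\chi(\dot x)})$ of $\tau\subseteq\T$ \emph{a fortiori}. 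It is precisely here that schematicity is indispensable: it guarantees that the numeral instances $\chi(\dot x)$ are all captured by a single parametric axiom $\chi(v)$ whose universal closure $\tau$ proves, rather than by an $x$-indexed family of distinct axioms, for which $\forall v\,\chi(v)$ need not be available in $\tau$.

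It then remains to establish the conservativity of $\mathtt{UTB}^-[\tau]$ over $\tau$, for which I would use the method of partial truth predicates. Any $\mathtt{UTB}^-[\tau]$-derivation of an $\mc{L}_\tau$-sentence uses only finitely many instances of $\mathtt{UTB}$, hence mentions only finitely many $\mc{L}_\tau$-formulas; fix $n$ bounding their quantifier complexity. Relativising all syntax to $N$, one defines inside $\tau$ the $\Sigma_n$-partial truth predicate $\Theta$ and interprets $\T$ by $\Theta$. Because $\Theta$ provably obeys the compositional Tarski clauses for $\Sigma_n$-formulas, $\sot$ (and hence $\tau$, via $N$) proves, for each of the finitely many fixed formulas $A$ involved, the disquotation $(\forall x:N)(\Theta(\corn{A(\dot x)})\lra A(x))$ by an external induction on the build-up of $A$ — an induction whose length depends on $A$, each step justified by a single bounded-complexity clause. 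Under this interpretation every one of the finitely many $\mathtt{UTB}$-instances becomes a theorem of $\tau$, while $\mc{L}_\tau$-sentences are left untouched, so the given derivation is transformed into a $\tau$-derivation of the same conclusion.

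The conceptual crux is the redundancy argument of the second paragraph, where schematicity is used essentially. The main technical obstacle is the partial-truth construction over the weak base: one must verify that $\Sigma_n$-satisfaction is definable and that its compositional clauses, together with the fixed-formula disquotations, are already provable in $\sot$, since the bounded setting is more delicate than the familiar $\mathtt{PA}$-based case; equally, the systematic relativisation of all coding and of the quantifier $(\forall x:N)$ to $N$ must be tracked so that the variable in each axiom $\chi(v)$ and in the matching $\mathtt{UTB}$-instance line up. The remaining checks — that the interpretation fixes $\mc{L}_\tau$-sentences and that finitely many instances genuinely suffice — I expect to be routine.
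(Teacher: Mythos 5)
Your proposal is correct in its essentials, and its conceptual core coincides with the paper's: the only place where schematicity is used is to guarantee that each formula $\chi(v)$ instantiating $(\tau\subseteq\T)$ satisfies $\tau\vdash\forall v\,\chi(v)$, so that the corresponding soundness instances become trivially verifiable. You package this as a separate ``redundancy'' step (every instance of $(\tau\subseteq\T)$ is already derivable in $\mathtt{UTB}^-[\tau]$, reducing the lemma to conservativity of $\mathtt{UTB}^-[\tau]$), whereas the paper handles both schemata in a single substitution; your factorization is clean and arguably more illuminating about exactly where schematicity enters.

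Where you genuinely diverge --- and where I would push back --- is the conservativity of the disquotational part. The paper does \emph{not} use partial $\Sigma_n$-satisfaction: given a derivation $\mc{D}$, it collects the finitely many formulae $\psi_1(v),\ldots,\psi_n(v)$ occurring in instances of either schema and replaces $\T$ throughout $\mc{D}$ by the single formula $\mf{T}(x):\lra \bigvee_{i\leq n}\exists y\,(x=\corn{\psi_i(\dot{y})}\land \psi_i(y))$, for which the only syntactic fact needed is the provable injectivity of the numeral-substitution function, available in $\sot$. Your route through a $\Sigma_n$-partial truth predicate is substantially heavier and is exactly where your argument is at risk: you need satisfaction for formulae with \emph{unbounded} quantifier alternations to be definable with $\sot$-provably correct Tarski clauses, and it is far from clear that $\sot$ --- as opposed to, say, $\ea$ or $\mathtt{I}\Sigma_1$ --- supports this; moreover, if the $\mathtt{UTB}$ instances range over $\mc{L}_\tau$-formulae rather than $\lnat$-formulae (as in the generalized setting of Section 5), an arithmetical satisfaction predicate does not cover them at all, while the finite disjunction handles formulae of any language and any complexity uniformly. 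So: keep your first two paragraphs, but replace the partial-truth construction by Tarski's finite-disjunction definition; with that substitution your proof becomes a correct, slightly more modular rendering of the paper's argument.
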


    \begin{proof}
        The proof goes back to Tarski's original argument for the definability of truth predicates for finitely many sentences. 
        
        Let $\psi_1(v)\ldots \psi_n(v)$ be the finitely many formulae occurring in the instances of $\mathtt{UTB}$ in a $\mathtt{SC}[\tau]$ proof $\mc{D}$, together with the finitely many formulae instantiating $(\tau \subseteq  \T)$ in $\mc{D}$. Consider the $\lnat$-formula
        \[
            \mf{T}(x):\lra \exists y (x=\corn{\psi_1(\dot{y})}\land \psi_1(y))\vee\ldots\vee \exists y(\corn{\psi_n(\dot{y})}=x \land \psi_n(y))
        \]
        One simply replaces $\T$ with $\mf{T}$ in $\mc{D}$. All instances of $\mathtt{UTB}$ in $\mc{D}$ are then provable in $\tau$. It can be verified that, for the finitely instances of $(\tau \subseteq \T)$ in $\mc{D}$,
        \[
            \forall x (\tau(\corn{\vphi(\dot{x})})\ra \mf{T}\corn{\vphi(\dot{x})})).
        \]
        We notice that the assumption that $\tau$ is schematic is essential. 
    \end{proof}

The axiomatic theory of truth $\mathtt{CT}^-[\tau]$,  is obtained by extending $\tau$ with the axioms\footnote{In the axioms, we write $\forall \vphi \ldots$ as an abbreviation for $\forall x (\mathtt{Sent}_{\lt}(x) \ra \ldots$.} 
\begin{align}
    \tag{$\mathtt{CT}\!P$} &\forall x_1\ldots x_n(\T\corn{P(\dot{x}_1\ldots \dot{x}_n)}\lra P(x_1,\ldots,x_n))&& \text{for $P$ in $\mc{L}_\tau$}\\[1ex]
    \tag{$\mathtt{CT}\neg$}& \forall \vphi\in \mc{L}_\tau(\T(\neg \vphi)\lra \neg \T \vphi)\\[1ex]
    \tag{$\mathtt{CT}\land$}& \forall \vphi,\psi\in \mc{L}_\tau(\T(\vphi\land \psi)\lra (\T\vphi\land \T\psi))\\[1ex]
    \tag{$\mathtt{CT}\forall$}& \forall \vphi(v) \in \mc{L}_\tau(\T(\forall v\vphi)\lra \forall x\,\T\vphi(\dot{x}/v))
\end{align}
For $\tau=\pa$, $\mathtt{CT}$ is the extension of $\mathtt{CT}^-[\pa]$ with full induction for $\lt$.

\section{Basic Principles for Implicit Commitment}\label{sec:basic}

%\subsection{Basic Principles}
 \cite{leni22} introduced a formal framework to capture necessary conditions for the implicit commitments of a sufficiently strong formal mathematical theory. Crucially, they consider only theories \emph{formulated in the language of arithmetic $\lnat$}. In this sub-section we summarize their main results. They axiomatize an operator $\mc{I}$ on theories, governed by the principles introduced in the following definition.
\begin{definition}[Invariance, Reflection]
Theories are taken to be $\Delta_0^b$-formulae.  Given $\sigma$ in the language $\lnat$, the collection of its implicit commitments $\mc{I}(\sigma)$ is constrained by the following principles of \emph{invariance} and \emph{reflection}: for $\tau,\tau'$ theories in $\lnat$, 
    \begin{align}
        \tag{\textsc{inv}} &\tau \leq_{\mathtt{pt}} \tau'\text{ only if }\mc{I}(\tau)\subseteq \mc{I}(\tau')\\[1ex]
        \tag{\textsc{ref}} &\sot \vdash \forall x\, \tau (\corn{\vphi(\dot{x})})\text{ only if } \forall x \vphi \in \mc{I}(\tau)
    \end{align}
\end{definition}

Given a theory $\tau$, each of \textsc{(inv)} and \textsc{(ref)} over $\tau$ can be reduced to $\tau$ by assigning a more definite  meaning of $\mc{I}$. For \textsc{(inv)}, one can consider the trivial interpretation in which $\mc{I}(\tau)=\tau$: the arithmetical soundness of $\sot$ then guarantees that $\tau\subseteq \tau'$. For \textsc{(ref)}, one considers the set
\[
    S=\{ \forall x A \sth \sot \vdash \forall x \,\tau(\corn{A(\dot{x})}) \}
\]
for $\tau$ schematically axiomatized, with $\tau$ an extension of Kalmar's Elementary Arithmetic $\ea$. Therefore, the theory $\mathtt{SC}[\tau]$ includes all members of $S$. By Lemma \ref{lem:sccons}, $S$ is conservative over $\tau$.

\begin{proposition}\label{prop:basicleni}
    For $\tau\supseteq \sot$ a theory in $\lnat$, $\mathtt{RFN}(\tau)\subseteq \mc{I}(\tau)$. 
\end{proposition}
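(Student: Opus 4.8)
The plan is to obtain each uniform reflection sentence in $\mc{I}(\tau)$ by applying \textsc{ref} not to $\tau$ itself but to an auxiliary theory $\tau'$ that is p-time equivalent to $\tau$, and then importing the result through \textsc{inv}. The crucial observation is that, while the conditional $\mathtt{Prov}_\tau(\corn{\vphi(\dot x)})\ra \vphi(x)$ is \emph{not} a theorem of $\tau$ for those $x$ where $\tau$ does not prove $\vphi(\dot{x})$ (by L\"ob's theorem), the variant that carries an explicit proof variable $y$ \emph{is} uniformly provable. So fix $\vphi\in\mathtt{Form}_{\mc{L}_\tau}$ and, using a pairing function, set
\[
\vartheta(\langle x,y\rangle):\lra \mathtt{Proof}_\tau(y,\corn{\vphi(\dot x)})\ra \vphi(x).
\]

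First I would verify that each instance $\vartheta(\ovl{\langle n,m\rangle})$ is a theorem of $\tau$, and in fact that a single p-time function $g$ of $n,m$ produces a $\tau$-proof of it. This is a case split on the $\Delta_0^b$ (hence p-time decidable) condition $\mathtt{Proof}_\tau(\ovl m,\corn{\vphi(\ovl n)})$: if it holds, then $\ovl m$ is itself a $\tau$-proof of $\vphi(\ovl n)$, which we weaken to the conditional; if it fails, then $\sot$ refutes it by a feasibly constructible derivation of $\neg\mathtt{Proof}_\tau(\ovl m,\corn{\vphi(\ovl n)})$, whence the conditional again. Since $\tau\supseteq\sot$, both derivations live in $\tau$, and the entire case analysis is p-time and provably correct in $\sot$.

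Next I would define the $\Delta_0^b$-theory $\tau'(w):\lra \tau(w)\lor \exists z\leq w\,(w=\corn{\vartheta(\dot z)})$, i.e.\ $\tau$ augmented with all instances $\vartheta(\dot z)$ as axioms. By construction $\sot\vdash\forall z\,\tau'(\corn{\vartheta(\dot z)})$, so \textsc{ref} gives $\forall z\,\vartheta(z)\in\mc{I}(\tau')$. At the same time $\tau'\leq_{\mathtt{pt}}\tau$: the p-time function $f$ that walks through a $\tau'$-derivation and replaces every leaf labelled by a new axiom $\vartheta(\dot z)$ with the $\tau$-derivation supplied by $g$ yields $\mathtt{Proof}_{\tau'}(p,\chi)\ra \mathtt{Proof}_\tau(f(p),\chi)$, an implication provable in $\sot$ by length-induction on $p$. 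Hence by \textsc{inv}, $\mc{I}(\tau')\subseteq \mc{I}(\tau)$, so $\forall z\,\vartheta(z)\in\mc{I}(\tau)$. Collapsing the proof quantifier by the pure-logic equivalence $\forall y(\mathtt{Proof}_\tau(y,\corn{\vphi(\dot x)})\ra\vphi(x))\lra(\mathtt{Prov}_\tau(\corn{\vphi(\dot x)})\ra\vphi(x))$ shows that $\forall z\,\vartheta(z)$ is logically equivalent to the uniform reflection instance $\forall x(\mathtt{Prov}_\tau(\corn{\vphi(\dot x)})\ra\vphi(x))$; as implicit commitments are closed under logical consequence, the latter lies in $\mc{I}(\tau)$. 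Since $\vphi$ was arbitrary, $\mathtt{RFN}(\tau)\subseteq\mc{I}(\tau)$.

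The one genuinely delicate point—where I would spend most of the care—is the feasibility claim underlying $\tau'\leq_{\mathtt{pt}}\tau$: one must check not only that $\sot$ proves each instance, but that the $\sot$-refutations of false $\Delta_0^b$ proof-statements and the global substitution $f$ both stay polynomial and remain $\sot$-verifiable. This is precisely why the argument must be run with the $\Delta_0^b$ predicate $\mathtt{Proof}_\tau$ rather than with the $\Sigma_1$ predicate $\mathtt{Prov}_\tau$, and why it is essential that the proof variable $y$ be kept \emph{external} to the axioms: were it pushed inside, the axioms would revert to the non-$\tau$-provable conditionals, destroying the p-time reduction; pushing it back in is legitimate only at the very last, purely logical, step, after \textsc{inv} has already delivered $\forall z\,\vartheta(z)$ into $\mc{I}(\tau)$.
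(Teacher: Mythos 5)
Your proposal is correct and follows essentially the same route as the paper: re-axiomatize $\tau$ by adding the small reflection instances $\mathtt{Proof}_\tau(y,\corn{\vphi(\dot{x})})\ra\vphi(x)$ as new axioms, apply \textsc{ref} to the resulting $\tau'$, and then transfer back via \textsc{inv} using the p-time provability of small reflection in $\tau$. Your write-up merely makes explicit two points the paper leaves implicit, namely the case-split argument establishing the feasible provability of each small reflection instance and the final purely logical passage from the prenexed proof-variable form to the $\mathtt{Prov}_\tau$ form of the reflection instance.
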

\begin{proof}[Proof Sketch]
    The proof is given in detail in \cite{leni22} and rests on the (well-known) provability of the so-called \emph{small reflection principle} in $\tau$.  

    One first ``re-axiomatizes'' $\tau$ as
    \[
        \tau'(x):\lra \tau(x)\vee \exists y\leq x \,x= \corn{\mathtt{Proof}_\tau(y_1,\vphi(y_2))\ra \vphi(y_2)}.\footnote{Here $y$ is intended to be a pair $(y_1,y_2)$.}
    \]
    Notice that this ``re-axiomatization'' is dependent on the given $\vphi$. 
    
    By \textsc{(ref)} applied to $\tau'$, since 
    \beq\label{eq:smaref}
        \sot \vdash \forall y\, \tau'(\corn{\mathtt{Proof}_\tau(y_1,\vphi(y_2))\ra \vphi(y_2)}),
    \eeq
    we obtain that the instance of $\mathtt{RFN}(\tau)$ for $\vphi$ is in $\mc{I}(\tau')$. Since $\vphi$ is arbitrary, we in fact showed that all instances of $\mathtt{RFN}(\tau)$ are in $\mc{I}(\tau')$.
    
    Moreover, since 
    \beq
        \sot \vdash \forall y\, \mathtt{Prov}_\tau(\corn{\mathtt{Proof}_\tau(y_1,\vphi(y_2))\ra \vphi(y_2)}),
    \eeq
    we have that $\tau' \leq_{\mathtt{pt}} \tau$. Therefore, $\mc{I}(\tau')\subseteq \mc{I}(\tau)$ and therefore $\mathtt{RFN}(\tau)\subseteq \mc{I}(\tau)$. 
\end{proof}

In fact, if one accepts $\mc{I}(\tau)$ as necessary conditions for one's implicit commitments given the acceptance of $\tau$, it can be shown that a lower bound for one's implicit commitments coincides with  the acceptance of all instances of uniform reflection for $\tau$. 
\begin{proposition}
    Let 
    \[
        \mc{I}_{\mathtt{RFN}}(\tau)=\{ \vphi \sth \tau+\mathtt{RFN}(\tau)\vdash \vphi\}.
    \]  
    Then $\mc{I}_{\mathtt{RFN}}(\tau)$ satisfies (i) and (ii). 
\end{proposition}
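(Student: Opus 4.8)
The plan is to verify the two conditions, \textsc{(inv)} and \textsc{(ref)}, directly from the definition of $\mc{I}_{\mathtt{RFN}}$, using only standard facts about the formalized provability predicate $\mathtt{Prov}_\tau$. Throughout I assume, as in Proposition~\ref{prop:basicleni}, that $\tau\supseteq\sot$, so that the relevant derivability conditions are available inside each theory.

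For \textsc{(inv)}, suppose $\tau\leq_{\mathtt{pt}}\tau'$ and let $\vphi\in\mc{I}_{\mathtt{RFN}}(\tau)$, i.e.\ $\tau+\mathtt{RFN}(\tau)\vdash\vphi$; I want $\tau'+\mathtt{RFN}(\tau')\vdash\vphi$. The first step is to extract from the witnessing p-time function $f$ the formalized implication
\[
\sot\vdash\forall x\,(\mathtt{Prov}_\tau(x)\ra\mathtt{Prov}_{\tau'}(x)),
\]
which follows by existentially quantifying the defining clause $\mathtt{Proof}_\tau(y,x)\ra\mathtt{Proof}_{\tau'}(f(y),x)$ over $y$ and using that $\sot$ proves $f$ total. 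This yields both halves I need. First, externalizing the implication shows that every theorem of $\tau$ is a theorem of $\tau'$, so $\tau'$, a fortiori $\tau'+\mathtt{RFN}(\tau')$, proves the base theory $\tau$. Second, for each instance $\forall x(\mathtt{Prov}_\tau(\corn{\vphi(\dot x)})\ra\vphi(x))$ of $\mathtt{RFN}(\tau)$, the displayed implication (instantiated at $\corn{\vphi(\dot x)}$ under $\forall x$) together with the matching instance of $\mathtt{RFN}(\tau')$ yields that instance of $\mathtt{RFN}(\tau)$ inside $\tau'+\mathtt{RFN}(\tau')$. Hence $\tau'+\mathtt{RFN}(\tau')$ proves every axiom of $\tau+\mathtt{RFN}(\tau)$, and thus $\mc{I}_{\mathtt{RFN}}(\tau)\subseteq\mc{I}_{\mathtt{RFN}}(\tau')$.

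For \textsc{(ref)}, suppose $\sot\vdash\forall x\,\tau(\corn{\vphi(\dot x)})$; I want $\forall x\,\vphi\in\mc{I}_{\mathtt{RFN}}(\tau)$. The key move is the derivability condition that being an axiom entails being provable: since a single-line proof witnesses provability of any axiom, $\sot\vdash\forall y\,(\tau(y)\ra\mathtt{Prov}_\tau(y))$. Combining this with the hypothesis at $y=\corn{\vphi(\dot x)}$ gives $\sot\vdash\forall x\,\mathtt{Prov}_\tau(\corn{\vphi(\dot x)})$, hence the same inside $\tau$. Applying the instance $\forall x(\mathtt{Prov}_\tau(\corn{\vphi(\dot x)})\ra\vphi(x))$ of $\mathtt{RFN}(\tau)$ then discharges the antecedent and yields $\tau+\mathtt{RFN}(\tau)\vdash\forall x\,\vphi(x)$, as required.

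The verification is essentially bookkeeping, and the only points that demand care are the two formalized facts about $\mathtt{Prov}_\tau$: the passage from the p-time reduction to a $\sot$-provable implication between provability predicates, and the axiom-to-provability derivability condition. Both must be established uniformly in the displayed free variable $x$, which is where the choice of dyadic numerals and the p-time behaviour of the substitution function $\corn{\vphi(\dot x)}$ (estimated in Section~\ref{sec:prelim}) guarantee that $\mathtt{Prov}_\tau(\corn{\vphi(\dot x)})$ is a legitimate $\sot$-formula of the right complexity. Once these are in place, no appeal to the small reflection principle or to $\mathtt{SC}[\tau]$ is needed: unlike Proposition~\ref{prop:basicleni}, where the re-axiomatization trick is essential, here $\mc{I}_{\mathtt{RFN}}$ already contains uniform reflection by construction, so both conditions reduce to monotonicity of provability together with a single application of the relevant reflection instance.
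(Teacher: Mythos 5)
Your proof is correct and follows essentially the route the paper intends: the paper states this proposition without an explicit proof, but its argument is exactly the one you give, as can be seen from the proof of the generalized version (Proposition~\ref{prop:natint}), where \textsc{(ref)} is discharged by ``axiomhood implies provability'' plus the relevant instance of $\mathtt{RFN}(\tau)$, and the invariance half by transferring each instance of $\mathtt{RFN}(\tau)$ through the $\sot$-provable implication $\mathtt{Prov}_\tau(x)\ra\mathtt{Prov}_{\tau'}(x)$ extracted from the reduction. Your added care about the standing assumption $\tau,\tau'\supseteq\sot$ and about externalizing the reduction via the soundness of $\sot$ matches the paper's own remarks preceding Proposition~\ref{prop:basicleni}.
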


\section{Iterations}\label{sec:iterat}

In this section we experiment with the intuitive idea that the implicit commitments of a theory should give rise to new implicit commitments: if, upon accepting an \emph{arbitrary} theory $\tau$, one is committed to the acceptance of a theory $\tau'$, then we can apply the same reasoning starting from $\tau'$ and so on. Moreover, if one sees the legitimacy of the whole process of passing from $\tau$ to $\tau'$, then one should be able to carry this reasoning through limit steps and extend the whole procedure into the transfinite. Below we check how our approach fits into this picture.

In this section we work with theories extending $\ea$ (see \cite[Section 3.1]{leni22} for a definition). We fix an elementary presentation of a sufficiently large ordinal number $(\Gamma, \prec)$.\footnote{See for instance \cite[Chapter 4]{scwa11}.} In our progression two types of theories will be intertwined. Firstly we will have theories of the implicit commitment operator $\mathtt{IC}_{\lambda}$ formulated in the language of arithmetic $\lnat$ and two auxiliary primitive symbols: a unary predicate $\mathtt{I}(x)$ and binary relation $\mathcal{J}(x,y)$. The scheme of elementary induction is extended to all elementary formulae in the language with $\mathcal{J}$ and $\mathtt{I}$ (this theory is denoted with $\ea(\mathcal{J}, \mathtt{I})$). The intended interpretation of $\mathtt{I}$ is the set of sentences to which a mathematical agent is committed at the current stage of the process, given her initial acceptance of $\tau$. The intuitive reading of $\mc{J}(x,y)$ is ``$y$ is an implicit commitment of a theory $x$'' and we require that $x$ is presented via an elementary definition in the language $\lnat$ extended with the predicate $\mathtt{I}$. The second type of theories in the progression are theories in $\lnat$ denoted $\tau_{\alpha+1}$ (defined only for successor stages).

We extend our coding of syntax to cover the case of formulae with the newly added predicates. By default, metavariables $\sigma, \sigma', \tau,\tau'$ range over elementary presentations, in the extended language, of theories in $\lnat$. The set of such theories will be represented arithmetically via the predicate $\Delta_0(\mathtt{I})$, whereas the set of all formulae in the language with $\mathtt{I}$ via $\textnormal{Form}_{\mathtt{I}}$ To facilitate the reading, for a fixed $\sigma$, we often treat $\mathcal{J}(\sigma,y)$ as a set and write e.g. $\phi \in \mathcal{J}(\sigma)$ instead of $\mathcal{J}(\sigma, \phi)$. Finally, $\sigma\leq \sigma'$ denotes the arithmetical counterpart of the assertion ``All consequences of $\sigma$ are consequences of $\sigma'$''.

\begin{definition}

    \begin{align*}
            \mathtt{IC}_0:= &\;\ea(\mathtt{I}, \mathcal{J}) + \forall \phi\in\lnat\bigl(\mathtt{Prov}_{\tau}(\phi)\rightarrow \mathtt{I}(\phi)\bigr)&\\[1ex]
            \tau_{\alpha+1}:= &\;\{\phi\in\lnat : \mathtt{IC}_{\alpha}\vdash \phi\in \mathcal{J}(\mathtt{I})\}&\\[1ex]
            \mathtt{IC}_\lambda :=\;& \;\; \ea(\mathtt{I}, \mathcal{J}) + \forall \vphi \in \lnat\forall\beta\prec\lambda(\mathtt{Prov}_{\tau_{\beta+1}}(\vphi)\ra \mathtt{I}(\vphi)\bigr)\\
    \end{align*}
    Moreover each $\mathtt{IC}_{\alpha}$ is closed under the rules
    \begin{align}
    & \frac{\sigma \leq \sigma'}{\mc{J}(\sigma)\subseteq \mc{J}(\sigma')} \tag{\textsc{inv}}\\[1ex]
    &\frac{\forall x \,\sigma(\vphi(\dot{x}))}{\forall x \vphi\in\mc{J}(\sigma)} \tag{\textsc{ref}}
    \end{align}
\end{definition}

The intuition behind the definition above is that $\mathtt{IC}_{\alpha}$ describes the theory of the implicit commitment of $\tau$ at stage $\alpha$ (this theory is given by $\mathtt{I}$) and its implicit commitment (through $\mathcal{J}$). $\tau_{\alpha+1}$ describes the implicit commitment of level $\alpha+1$.

Now we give a more rigorous definition of this progression. We think of a formula $\mathtt{IC}(\alpha, x)$ as defining the theory describing the commitments of $\alpha$-th level. Formally this theory will be a set of pairs, whose first element encodes axioms and the second -- rules of reasoning in this theory. Let $\mathtt{IT}(\corn{\Phi(y,z)},\lambda, x)$ denote the formula 
\begin{multline*}
    (x)_0 = \corn{\forall \vphi\in\lnat\forall \beta\prec\lambda\bigl(\mathtt{Prov}_{\Phi(\beta,\hat{z})}(\vphi\in \mathcal{J}(\mathtt{I}))\rightarrow \mathtt{I}(\vphi)\bigr)} \wedge \\ \exists \sigma,\sigma'\in\Delta_0(\mathtt{I}) \bigl((x)_1 = \langle{ \corn{\sigma\leq \sigma'}, \corn{\mathcal{J}(\sigma)\subseteq \mathcal{J}(\sigma')}}\rangle \\ \vee \exists \vphi\in \mathtt{Form}_{\mathtt{I}}\exists\sigma\in\Delta_0(\mathtt{I})\bigl( (x)_1 = \langle{\forall x \sigma(\vphi(\dot{x})), \forall x\vphi\in\mathcal{J}(\sigma)}\rangle\bigr)\bigr) 
\end{multline*}

$\mathtt{IC}(\beta, x)$ is obtained via the standard diagonal lemma and provably in $\ea$ satisfies
\begin{equation*}
\mathtt{IC}(\beta, x)\equiv \mathtt{IT}(\corn{\mathtt{IC}(y,z)},\beta,x) 
\end{equation*}

Let us set $\tau_{\alpha}:= \{\phi\in\lnat \sth \mathtt{IC}_{\alpha}\vdash \phi\in \mathcal{J}(\mathtt{I})\}$ and $\tau_{\Gamma}:= \bigcup_{\alpha\prec\Gamma} \tau_{\alpha+1}$. We observe that $\mathtt{IC}_{\alpha}$ is given by an elementary formula. In order to facilitate the reading we shall use the same symbol $\vdash$ to denote both formalized and metatheoretical provability relation. In particular
\[\ea \vdash \pa\vdash \corn{\phi},\]
encodes $\ea\vdash \textnormal{Prov}_{\pa}(\corn{\phi}).$ This shall help us in the contexts where the provability predicates are nested. The expression
\[\tau\vdash \tau'\vdash \textnormal{RFN}(\sigma)\]
should be understood as
\[\tau \vdash \forall \phi \tau'\vdash\corn{\forall x \bigl(\textnormal{Prov}_{\sigma}(\phi(\dot{x}))\rightarrow \phi(x)\bigr)}.\]

We shall measure the strength of theories $\tau_{\alpha}$ by relating them to iterations of uniform reflection over $\tau$, defined in Section \ref{sec:prelim}. The following lemma shows that our main argument formalizes uniformly in $\ea$. It's proof is immediate.
\begin{lemma}\label{lem_unif_form_mainarg}
$\ea\vdash \forall \sigma\forall x \bigl((\mathtt{IC}_{\dot{x}}\vdash \corn{\sigma\subseteq \mathtt{I}}) \rightarrow (\tau_{x+1}\vdash \textnormal{RFN}(\sigma))\bigr)$
\end{lemma}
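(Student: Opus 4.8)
The plan is to internalize the proof of Proposition \ref{prop:basicleni} inside $\mathtt{IC}_x$, letting the predicate $\mathtt{I}$ play the role of the base theory $\tau$ and the relation $\mathcal{J}(\mathtt{I})$ play the role of the commitment operator $\mc{I}(\tau)$. Since the rules \textsc{inv} and \textsc{ref} are built into every $\mathtt{IC}_\alpha$, the argument is a direct transcription, and the only genuine work is to check that each step is an elementary construction uniform in $\sigma$, $x$ and the reflected formula $\phi$, so that it can be carried out under a single $\ea$-proof. Unwinding the provability convention, $\tau_{x+1}\vdash \mathtt{RFN}(\sigma)$ means $\forall\phi\,(\tau_{x+1}\vdash R_\phi)$ with $R_\phi:=\forall z(\mathtt{Prov}_\sigma(\corn{\phi(\dot z)})\ra \phi(z))$; since $R_\phi$ will be shown to be an axiom of $\tau_{x+1}$, it suffices to establish $\mathtt{IC}_x\vdash R_\phi\in\mathcal{J}(\mathtt{I})$ uniformly in the data.

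First I would fix $\sigma,x,\phi$ and form, elementarily in these data, the $\phi$-dependent re-axiomatization
\[
\sigma'(v):\lra \sigma(v)\vee \exists y\leq v\,\bigl(v=\corn{\mathtt{Proof}_\sigma(y_1,\phi(y_2))\ra \phi(y_2)}\bigr),
\]
observing that $\sigma'$ is again a $\Delta_0(\mathtt{I})$ presentation, so that the rules of $\mathtt{IC}_x$ apply to it. Because every sentence $\mathtt{Proof}_\sigma(\bar n_1,\phi(\bar n_2))\ra \phi(\bar n_2)$ literally matches the second disjunct, $\sot$ -- hence $\mathtt{IC}_x$ -- proves $\forall y\,\sigma'(\corn{\psi(\dot y)})$ for $\psi(y):=\mathtt{Proof}_\sigma(y_1,\phi(y_2))\ra \phi(y_2)$. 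Applying the \textsc{ref} rule to $\sigma'$ then yields $\mathtt{IC}_x\vdash \forall y\,\psi\in\mathcal{J}(\sigma')$, and $\forall y\,\psi$ is provably equivalent to $R_\phi$, i.e.\ to uniform reflection over the original $\sigma$.

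Second I would move from $\sigma'$ back to $\mathtt{I}$ by \textsc{inv}. The hypothesis supplies a proof $d$ witnessing $\mathtt{IC}_x\vdash \sigma\subseteq \mathtt{I}$, i.e.\ that every $\sigma$-theorem lies in $\mathtt{I}$; and the small reflection principle gives, provably in $\sot$, that $\forall y\,\mathtt{Prov}_\sigma(\corn{\mathtt{Proof}_\sigma(y_1,\phi(y_2))\ra \phi(y_2)})$, so each added axiom of $\sigma'$ is $\sigma$-provable. Hence every axiom of $\sigma'$ (both the original $\sigma$-axioms and the small-reflection sentences) is $\sigma$-provable and therefore lies in $\mathtt{I}$, whence $\mathtt{IC}_x\vdash \sigma'\leq \mathtt{I}$. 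Feeding this to the \textsc{inv} rule gives $\mathtt{IC}_x\vdash \mathcal{J}(\sigma')\subseteq \mathcal{J}(\mathtt{I})$, and composing with the previous step yields $\mathtt{IC}_x\vdash R_\phi\in\mathcal{J}(\mathtt{I})$, i.e.\ $R_\phi\in\tau_{x+1}$; since $\phi$ was arbitrary, $\tau_{x+1}\vdash \mathtt{RFN}(\sigma)$.

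Finally, I would verify that each operation above -- constructing $\sigma'$ from $(\sigma,\phi)$, producing the $\sot$-proof of the \textsc{ref}-premise, extracting from $d$ and small reflection a proof of $\sigma'\leq \mathtt{I}$, and appending the two rule applications -- is given by an elementary function of $(\sigma,x,\phi,d)$; the whole derivation of $R_\phi\in\mathcal{J}(\mathtt{I})$ in $\mathtt{IC}_x$ is thus produced by an $\ea$-provably total function, which is exactly what discharges the outer $\ea\vdash \forall\sigma\forall x(\ldots)$. The step requiring the most care -- and the reason the claim is only ``immediate'' modulo bookkeeping -- is precisely this uniformity: one must check that the \textsc{inv}-premise $\sigma'\leq \mathtt{I}$ is obtained from the fixed proof $d$ of $\sigma\subseteq \mathtt{I}$ together with the uniform small reflection principle by a construction independent of $x$ except through $d$, that $\sigma'$ is uniformly recognized as a legitimate $\Delta_0(\mathtt{I})$ theory so both rules genuinely apply inside $\mathtt{IC}_x$, and that the reflected instances $R_\phi$ are correctly identified as axioms of $\tau_{x+1}$.
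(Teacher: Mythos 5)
Your proof is correct and is essentially the proof the paper intends: the paper declares the lemma "immediate" precisely because it is the uniformization inside $\ea$ of the argument for Proposition \ref{prop:basicleni}, with $\mathtt{I}$ in the role of $\tau$ and $\mathcal{J}$ in the role of $\mc{I}$, which is exactly what you carry out (re-axiomatization by small reflection, \textsc{ref} applied to $\sigma'$, then \textsc{inv} via $\sigma'\leq\mathtt{I}$, plus the elementarity check that discharges the outer universal quantifiers). The only implicit reliance worth noting is that $\sigma$ ranges over elementarily presented extensions of $\ea$, as fixed by the section's conventions, so that the small reflection principle is indeed $\sigma$-provable.
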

\begin{proposition}\label{prop_strength_iter}
    For every $\alpha<\omega^2$, $\tau_{\Gamma}\vdash \textnormal{RFN}^{\alpha}(\tau).$
\end{proposition}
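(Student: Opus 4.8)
The plan is to run a transfinite induction along $\prec$ whose single inductive step is powered by Lemma~\ref{lem_unif_form_mainarg}, and to read the Proposition off from the behaviour of the stages $\tau_{\alpha+1}$ at and below $\omega^2$. Concretely, I would prove the auxiliary claim that $\tau_{\alpha+1}\vdash \mathtt{RFN}^{\alpha+1}(\tau)$ for every $\alpha$ with $\alpha+1<\omega^2$. Since $\tau_{\Gamma}=\bigcup_{\alpha\prec\Gamma}\tau_{\alpha+1}$ and every $\beta<\omega^2$ lies at or below some such $\alpha+1$, monotonicity of the reflection iterates then yields $\tau_{\Gamma}\vdash\mathtt{RFN}^{\beta}(\tau)$ for all $\beta<\omega^2$, which is exactly the statement. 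The base case $\alpha=0$ is immediate: $\mathtt{IC}_0$ proves $\tau\subseteq\mathtt{I}$ by its defining axiom, so Lemma~\ref{lem_unif_form_mainarg} applied with $\sigma=\tau=\mathtt{RFN}^0(\tau)$ gives $\tau_1\vdash\mathtt{RFN}(\tau)=\mathtt{RFN}^1(\tau)$.

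For the inductive step I would first establish, provably inside $\mathtt{IC}_\alpha$, the inclusion $\mathtt{RFN}^{\alpha}(\tau)\subseteq\mathtt{I}$, and then apply Lemma~\ref{lem_unif_form_mainarg} to the elementary presentation $\sigma=\mathtt{RFN}^{\alpha}(\tau)$ to obtain $\tau_{\alpha+1}\vdash\mathtt{RFN}(\mathtt{RFN}^{\alpha}(\tau))=\mathtt{RFN}^{\alpha+1}(\tau)$. The inclusion $\mathtt{RFN}^{\alpha}(\tau)\subseteq\mathtt{I}$ is forced by the defining axiom of $\mathtt{IC}_\alpha$, namely $\forall\vphi\forall\beta\prec\alpha(\mathtt{Prov}_{\tau_{\beta+1}}(\vphi)\to\mathtt{I}(\vphi))$, together with the induction hypothesis: for each $\beta\prec\alpha$ the hypothesis $\tau_{\beta+1}\vdash\mathtt{RFN}^{\beta+1}(\tau)$ formalizes as the provable inclusion $\mathtt{RFN}^{\beta+1}(\tau)\subseteq\tau_{\beta+1}$, whence $\mathtt{RFN}^{\beta+1}(\tau)\subseteq\mathtt{I}$ provably in $\mathtt{IC}_\alpha$. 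Taking the union over $\beta\prec\alpha$ gives $\bigcup_{\beta\prec\alpha}\mathtt{RFN}^{\beta+1}(\tau)\subseteq\mathtt{I}$, and since this union equals $\mathtt{RFN}^{\alpha}(\tau)$ at both successor and limit $\alpha$ (by the clauses defining the progression), the desired inclusion follows. Two routine facts are used here and should be isolated as sub-lemmas: (i) $\mathtt{RFN}$ is monotone with respect to provable inclusion, so that the $\mathtt{RFN}^{\gamma}(\tau)$ form an increasing chain whose unions behave as just claimed; and (ii) $\mathtt{RFN}^{\alpha}(\tau)$ admits an elementary presentation (as recalled in Section~\ref{sec:prelim}, following \cite{bek18}), which is precisely what lets it serve as the $\Delta_0(\mathtt{I})$-theory $\sigma$ required in the hypothesis of Lemma~\ref{lem_unif_form_mainarg}.

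The main obstacle is not the single step but the uniform, transfinite bookkeeping that glues the steps together through limit stages. The step above reads model-theoretically, but to iterate it one must run it as a genuine induction on $\alpha$ for the $\Pi$-type assertion ``for all $\beta\prec\alpha$, every axiom of $\mathtt{RFN}^{\beta+1}(\tau)$ is $\tau_{\beta+1}$-provable'', and this must be carried out provably in the elementary metatheory underlying the whole construction (Lemma~\ref{lem_unif_form_mainarg} is itself stated as a theorem of $\ea$). The amount of transfinite induction available there for assertions of this complexity is exactly what caps the argument: it is provable along $\prec$ below $\omega^2$ but not beyond, which is the source of the bound in the statement. I would therefore spend the bulk of the write-up on making the presentations match — reconciling the $\Sigma_1$ definition $\tau_{\beta+1}=\{\phi\in\lnat : \mathtt{IC}_\beta\vdash \phi\in\mathcal{J}(\mathtt{I})\}$ with the elementary presentation of $\mathtt{RFN}^{\beta+1}(\tau)$ up to provable mutual inclusion — and on verifying that the resulting induction has a form the base theory certifies up to $\omega^2$. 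Pushing past $\omega^2$ would require strengthening either the ambient induction in $\mathtt{IC}$ or the metatheory, and is precisely what the present method leaves open.
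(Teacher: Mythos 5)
You have the right skeleton --- base case from $\mathtt{IC}_0\vdash\tau\subseteq\mathtt{I}$ plus Lemma~\ref{lem_unif_form_mainarg}, inductive step by establishing $\mathtt{RFN}^{\alpha}(\tau)\subseteq\mathtt{I}$ inside $\mathtt{IC}_{\alpha}$ and applying the Lemma again --- and you correctly sense that the real difficulty is making the induction go through in the elementary metatheory. But your proposal stops exactly where the paper's proof actually begins. The statement you need to induct on, $\theta(x):=\mathtt{IC}_{\omega\cdot n+x+1}\vdash\corn{\tau_{\omega\cdot n+x+1}\vdash\mathtt{RFN}^{\omega\cdot n+x+1}(\tau)}$, is $\Sigma_1$, while $\ea$ only has induction for elementary formulae; saying you would ``verify that the resulting induction has a form the base theory certifies up to $\omega^2$'' names the problem without solving it. The paper's solution is a concrete device you do not supply: replace $\vdash$ by the bounded relation $\vdash^{y}$ (proofs of total size less than $y$), run elementary induction on the now-bounded formula $\theta^f(x)$ with $f(x)=x^{x^x}$, and verify the growth estimate $g_3(g_2(g_1(f(x))))\leq f(x+1)$ for the three proof transformations involved (provable $\Sigma_1$-completeness, applying the axiom of $\mathtt{IC}_{\omega\cdot n+x+1}$, and applying Lemma~\ref{lem_unif_form_mainarg} internally). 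Without this, or some substitute, the step you describe cannot be executed in $\ea$ at limit stages, where one genuinely needs the universally quantified internal statement $\forall x\,(\tau_{\omega\cdot n+x+1}\vdash\mathtt{RFN}^{\omega\cdot n+x+1}(\tau))$ before the (\textsc{ref}) rule can be applied.

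Relatedly, your diagnosis of the bound $\omega^2$ is off. It is not that $\ea$ proves transfinite induction along $\prec$ up to $\omega^2$ but not beyond for assertions of this complexity; the paper does not run a single transfinite induction up to $\omega^2$ at all. It runs an \emph{external} induction on $n$, producing for each standard $n$ an $\ea$-proof of $\tau_{\omega\cdot n+1}\vdash\mathtt{RFN}^{\omega\cdot n+1}(\tau)$, and within each such step an \emph{internal} $\ea$-induction over the finite offsets $x$ covering the block from $\omega\cdot n$ to $\omega\cdot(n+1)$. The supremum $\omega^2$ arises because the external induction can only be iterated through the standard integers. Indeed, the authors conjecture that fully formalizing the argument in $\ea$ would push the bound to $\omega^{\omega}$, which shows that $\omega^2$ is an artifact of this two-level architecture rather than a ceiling imposed by the transfinite induction available in the metatheory, as your write-up suggests.
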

\begin{proof}
    It is clearly sufficient to show that for every $\alpha<\omega^2$, $\ea$ proves that $\tau_{\alpha+1}\vdash \textnormal{RFN}^{\alpha+1}(\tau).$ By external induction on $n$, we prove that for each $n$ the following sentence is provable in $\ea$
    \begin{equation}\label{equat_ind_hyp}
        \tau_{\omega\cdot n+1}\vdash \textnormal{RFN}^{\omega\cdot n +1}(\tau).
    \end{equation}
    The step for $n=0$ follows from Lemma \ref{lem_unif_form_mainarg}, because $\mathtt{IC}_0\vdash \tau \subseteq \mathtt{I}$. 
    % By provable $\Sigma_1$-completeness and the fact that $\mathtt{IC}_{\alpha}$ extend $\ea$, it follows that the following is provable in $\ea$
    % \[\mathtt{IC}_1\vdash\corn{\tau_1\vdash \textnormal{RFN}^{1}(\tau)}.\]
    Assume inductively that $\ea$ proves that
    \[\tau_{\omega\cdot n +1}\vdash \textnormal{RFN}^{\omega\cdot n +1}(\tau).\] 
    By $\Sigma_1$-completeness and the fact that $\ea$-verifiably each $\mathtt{IC}_{\alpha}$ extends $\ea$ it follows that actually $\ea$ proves that
    \[\mathtt{IC}_{\omega\cdot n+1}\vdash\corn{\tau_{\omega\cdot n +1}\vdash \textnormal{RFN}^{\omega\cdot n +1}(\tau)}.\] 
    To verify the induction step we show $\forall x \theta(x)$, where
    \[\theta(x):= \mathtt{IC}_{\omega\cdot n+x+1}\vdash\corn{\tau_{\omega\cdot n+x +1}\vdash \textnormal{RFN}^{\omega\cdot n +x +1}(\tau)}.\]
    Firstly, for a sufficiently large natural number $k\in\mathbb{N}$ we prove $\forall l\leq k \theta(l)$ by external induction. This induction process does not immediately formalize in $\ea$, since ``$\mathtt{IC}_{\alpha}\vdash \cdots$'' is a $\Sigma_1$-formula. To fix this we need to control the sizes of proofs in $\mathtt{IC}_{\alpha}$. Let $\sigma\vdash^{y}\phi$ denote the arithmetical formula which says that there is a proof of $\phi$ in the theory $\sigma$ whose total size is less than $y$, where by a total size of a proof $p$ we understand the length of a binary sequence coding $p$. Secondly, for all $x\geq k$ we prove $\theta(x)$ by elementary induction inside $\ea$ for the formula
    \[\theta^f(x):=\mathtt{IC}_{\omega\cdot n+x+1}\vdash^{f(x)}\corn{\tau_{\omega\cdot n+x +1}\vdash \textnormal{RFN}^{\omega\cdot n +x +1}(\tau)},\]
    to show that $\ea\vdash \forall x\geq k \theta(x),$ where $f(x) = x^{x^x}$.  From now on we reason in $\ea$. $\theta(k)$ holds by our previous induction assumption. Fix any $x$ and assume $\theta^f(x)$ holds. In particular, by provable $\Sigma_1$-completeness there is a function $g_1$ such that
    \[\mathtt{IC}_{\omega\cdot n +x+2}\vdash^{g_1(f(x))}\corn{\theta(\num{x})}.\]
    By applying the axiom of $\mathtt{IC}_{\omega\cdot n + x+1}$ we see that
    \[\mathtt{IC}_{\omega\cdot n +x+2}\vdash^{g_2(g_1(f(x)))}\corn{\mathtt{IC}_{\omega\cdot n +x+1}\vdash \corn{\textnormal{RFN}^{\omega\cdot n + x +1}(\tau)\subseteq \mathtt{I}}}.\]
    By using Lemma \ref{lem_unif_form_mainarg} inside $\mathtt{IC}_{\omega\cdot n +x+2}$ we see that
    \[\mathtt{IC}_{\omega\cdot n +x+2}\vdash^{g_3(g_2(g_1(f(x))))}\corn{\tau_{\omega\cdot n +x+2}\vdash \textnormal{RFN}^{\omega\cdot n + x +2}(\tau).}\]
    To complete the induction step, it is enough to show inside $\ea$ that
    \[g_3(g_2(g_1(f(x))))\leq f(x+1).\]
    However, $g_1(y)$ can be taken to be $y^c$ for some constant $c$, since the verification that an object of size  $y$ is a proof in a finitely axiomatized theory is in P-time and we can assume that $k$ is large enough. $g_2(y)$ can be taken to be $y+cy$ since the corresponding proof consists in extending the given proof with an independent of $y$ number of formulae of length at most $y$. The same is true of $g_3(y)$. Hence in total we have that
    \[g_3(g_2(g_1(f(x))))\leq y^{d},\]
    for some constant $d$ independent of $y$, and for all large enough numbers $y$. It is clear that if we choose $f(x)$ to be $x^{x^x}$, then for all large enough $x$ we have $\left(x^{x^x}\right)^d\leq x^{d\cdot x^x}\leq (x+1)^{{(x+1)}^{x+1}}.$ 
    
    Consequently, we have that $\ea\vdash \forall x \tau_{\omega\cdot n +x+1} \vdash \textnormal{RFN}^{\omega\cdot n +x+1}(\tau)$. In particular $\mathtt{IC}_{\omega\cdot (n+1)}\vdash \forall x \tau_{\omega\cdot n +x+1} \vdash \textnormal{RFN}^{\omega\cdot n +x+1}(\tau)$. Hence \[\mathtt{IC}_{\omega\cdot(n+1)}\vdash\forall x\forall \phi \mathtt{I}(\corn{\forall y(\textnormal{Prov}_{\textnormal{RFN}^{\omega\cdot n + \dot{x}}}(\phi(\dot{y}))\rightarrow \phi(y))}).\]
    By applying the (REF) rule, we obtain that 
    \[\mathtt{IC}_{\omega\cdot(n+1)}\vdash\forall \phi \corn{\forall x\forall y(\textnormal{Prov}_{\textnormal{RFN}^{\omega\cdot n + \dot{x}}}(\phi(\dot{y}))\rightarrow \phi(y))}\in  \mathcal{J}(\mathtt{I}).\]
    By $\Sigma_1$-completeness, the above is provable in $\ea$. As a consequence, $\ea$ proves
    \[\forall \phi \tau_{\omega\cdot(n+1)+1}\vdash \forall x\forall y(\textnormal{Prov}_{\textnormal{RFN}^{\omega\cdot n + \dot{x}}}(\phi(\dot{y}))\rightarrow \phi(y)).\]
     It follows that $\ea\vdash \tau_{\omega\cdot(n+1)+1}\vdash \textnormal{RFN}^{\omega\cdot(n+1)+1}(\tau),$ which concludes the induction step of the main external induction and the whole argument.
    % By $\Sigma_1$-completeness, we obtain $\ea\vdash \mathtt{IC}_{\omega\cdot(n+1)+1}\vdash \corn{\tau_{\omega\cdot(n+1)+1}\vdash \textnormal{RFN}^{\omega\cdot(n+1)+1}(\tau)},$ which concludes the induction step.
\end{proof}

Arguably, one would expect the implicit commitments of a theory to contain more iterations of uniform reflection than just $\omega^2$. We conjecture that the proof of Proposition \ref{prop_strength_iter} can be formalized in $\ea$: we expect that what can be verified internally in $\ea$ is in fact that $\tau_{\omega^n+1} \vdash \textnormal{RFN}^{\omega^n+1}(\tau)$. In this way we could conclude that $\tau_{\Gamma}$ yields at least all iterations of uniform reflection up to the level $\omega^{\omega}$ (and perhaps more). We leave the verification of this to further research.

\section{Implicit Commitment and Domain Constants}\label{sec:domcon}

The theory presented in \cite{leni22}, as well as its iteration into the transfinite just described, are formulated in the language of arithmetic $\lnat$. Given the foundational relevance of the notion of implicit commitment, it is natural to ask whether it is possible to apply directly the framework above to foundationally relevant theories formulated in different languages. 

\subsection{Some positive results} An obvious case study is set theory. Standard systems of set theory, such as $\mathtt{ZFC}$, are formulated in a term-free language $\lin$ with signature $\{\in\}$. However, to apply our framework for implicit commitment -- \textsc{(ref)}, in particular -- to $\mathtt{ZFC}$ so formulated  one would require names for objects of the domain; in fact,  having names for the finite ordinals would already deliver nontrivial implicit commitments. For example, let $\mc{L}^{\vtl}_\in$ be the expansion of $\lin$ by a constant $0$ and a binary function symbol $\vtl$, whose informal interpretation is $x\cup \{y\}$. Let $\mathtt{ZFC}^{<\omega}$ feature the standard axioms of $\mathtt{ZFC}$, except that the empty set axiom is replaced by 
\[
    z=0\lra \forall y (y\notin z),
\]
and that we have additional axiom
\[
    z= y\vtl y \lra \forall u (u\in z \lra u\in y \vee u=y). 
\]
\noindent Working in a weak fragment $M$ of $\mathtt{ZFC}^{<\omega}$ -- in fact, we can conveniently choose a finite set theory equivalent to $\ea$ \cite{pet09} so that we can safely transfer here some of our results obtained in the arithmetical setting -- we can define by recursion on $\omega$ a function
\begin{align*}
    &\name(0)=0\\
    &\name(n\vtl n)= \lan \corn{\vtl}, \name(n),\name(n)\ran,
\end{align*}
where $\corn{\vtl}$ is a code for the symbol $\vtl$. This metatheory $M$ can thus establish that
    \begin{align*}
         & \forall x \in \omega \exists ! y \,\name(x)=y,\\
        & \forall x\forall y(\name(x)=\name(y)\ra x=y).
    \end{align*}
As before, we abbreviate the naming function via the dot notation. All syntactic notions and operations should now be understood via $M$. For readability, we still use the same labels for the usual syntactic predicates; we also abbreviate $n,m,...$ as variables for finite ordinals in the sense of $M$. Now, since $M$ gives us that $\mathtt{Prov}_{\mathtt{ZFC^{<\omega}}}(\corn{\mathtt{Con}(\dot{n},\mathtt{ZFC}^{<\omega})})$, we have that $\mathtt{ZFC}^{<\omega}$ as described above and its re-axiomatization via restricted consistency
\[
    x\in \mathtt{ZFC_I^{<\omega}} :\lra x\in \mathtt{ZFC}^{<\omega}\vee \exists n\, x=\corn{\mathtt{Con}(\dot{n},\mathtt{ZFC}^{<\omega})}
\]  
are ``elementarily'' reducible to one another.\footnote{Again, we are employing set-theoretic functions corresponding to the elementary ones. More details shortly.}  We are here assuming that our axiomatizations can be expressed as bounded formulae in $M$, and that some properties of $\ea$ carry over to $M$: in particular, we are resorting to a  witnessing theorem for  $\Pi_2$-statements -- cf.~\cite[Thm.~5.2]{pet09} -- in the sense of $\mc{L}^{<\omega}_\in$, which provides the required link between the notion of elementary reducibility employed in \cite{leni22} and $M$-reducibility.  If one reformulates \textsc{(ref)} as 
\begin{align}
    \tag{\textsc{ref*}} &M\vdash \forall y(y \text{ is a finite ordinal } \ra \mathtt{ZFC}^{<\omega}(\corn{A(\dot{y})})\\
    \notag &\Ra \forall y(y \text{ is a finite ordinal } \ra A) \in \mc{I}(\mathtt{ZFC}^{<\omega}),
\end{align}
and relativizes \textsc{(inv)} to the proof-transformations available in $M$ -- let's call it \textsc{(inv$^M$)} -- the same argument employed in Proposition \ref{prop:basicleni} gives 
\begin{proposition}
    $\mathtt{Con}(\mathtt{ZFC}^{<\omega})$ is in the implicit commitments of  $\mathtt{ZFC}^{<\omega}$ defined via \textsc{(inv$^M$)} and \textsc{(ref*)}.
\end{proposition}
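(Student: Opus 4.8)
The plan is to replay the argument of Proposition~\ref{prop:basicleni} inside the set-theoretic metatheory $M$, with the restricted consistency statements $\mathtt{Con}(\dot{n},\mathtt{ZFC}^{<\omega})$ playing the role that the small reflection axioms play in the arithmetical case. The re-axiomatization $\mathtt{ZFC_I^{<\omega}}$ is already tailored to this: by its very definition every $\mathtt{Con}(\dot{n},\mathtt{ZFC}^{<\omega})$ is an axiom of $\mathtt{ZFC_I^{<\omega}}$. Writing $A(y)$ for the restricted consistency formula $\mathtt{Con}(y,\mathtt{ZFC}^{<\omega})$ with free variable $y$ ranging over finite ordinals, we therefore have immediately
\[
    M\vdash \forall y(y\text{ is a finite ordinal}\ra \mathtt{ZFC_I^{<\omega}}(\corn{A(\dot{y})})).
\]

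First I would apply \textsc{(ref*)}, read in its general form for theories in $\mc{L}^{<\omega}_\in$, to the theory $\mathtt{ZFC_I^{<\omega}}$ and the formula $A(y)$. The displayed line is exactly the antecedent that \textsc{(ref*)} requires, so its conclusion gives
\[
    \forall y(y\text{ is a finite ordinal}\ra \mathtt{Con}(y,\mathtt{ZFC}^{<\omega}))\in\mc{I}(\mathtt{ZFC_I^{<\omega}}).
\]
Now the universal quantification of $\mathtt{Con}(y,\mathtt{ZFC}^{<\omega})$ over all finite ordinals $y$ is, provably in $M$, equivalent to ruling out a proof of $0=1$ of any finite size, i.e.\ to the full consistency statement $\mathtt{Con}(\mathtt{ZFC}^{<\omega})$. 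Exactly as in Proposition~\ref{prop:basicleni}, where the commitment obtained is the universally closed form of small reflection identified with the reflection instance, we read this off as $\mathtt{Con}(\mathtt{ZFC}^{<\omega})\in\mc{I}(\mathtt{ZFC_I^{<\omega}})$.

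Next I would discharge the re-axiomatization by means of \textsc{(inv$^M$)}. The excerpt records that $\mathtt{ZFC}^{<\omega}$ and $\mathtt{ZFC_I^{<\omega}}$ are $M$-reducible to one another; the direction we need is $\mathtt{ZFC_I^{<\omega}}\leq_M\mathtt{ZFC}^{<\omega}$, which rests on the set-theoretic \emph{small reflection principle} $M\vdash \mathtt{Prov}_{\mathtt{ZFC}^{<\omega}}(\corn{\mathtt{Con}(\dot{n},\mathtt{ZFC}^{<\omega})})$: each adjoined axiom is already a theorem of $\mathtt{ZFC}^{<\omega}$, so proofs in the larger theory can be turned into proofs in the smaller one by an $M$-definable transformation. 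By \textsc{(inv$^M$)} we then obtain $\mc{I}(\mathtt{ZFC_I^{<\omega}})\subseteq\mc{I}(\mathtt{ZFC}^{<\omega})$, and combining this with the previous step yields $\mathtt{Con}(\mathtt{ZFC}^{<\omega})\in\mc{I}(\mathtt{ZFC}^{<\omega})$, as required.

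The main obstacle is not the logical skeleton, which is identical to that of Proposition~\ref{prop:basicleni}, but the verification that the required arithmetical ingredients survive the passage to $M$. Two points deserve care: (i) that the small reflection principle genuinely holds in the set-theoretic setting, i.e.\ that $M$ proves $\mathtt{ZFC}^{<\omega}$ to prove each $\mathtt{Con}(\dot{n},\mathtt{ZFC}^{<\omega})$ — this relies on the naming function $\name$ and on $M$'s ability to carry out the length-of-proof bookkeeping on the codes supplied by the $\name$-translation; and (ii) that the proof-transformation witnessing $\mathtt{ZFC_I^{<\omega}}\leq_M\mathtt{ZFC}^{<\omega}$ is available in $M$ itself, and not merely in $\ea$, so that \textsc{(inv$^M$)} applies. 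Both hinge on the $\Pi_2$-witnessing theorem \cite[Thm.~5.2]{pet09}, which supplies the bridge between the elementary reducibility used in \cite{leni22} and $M$-reducibility; once this bridge is in place, the growth-rate estimates underwriting the elementary proof-transformations transfer essentially verbatim.
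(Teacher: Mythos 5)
Your proposal is correct and follows essentially the same route the paper indicates: re-axiomatize $\mathtt{ZFC}^{<\omega}$ as $\mathtt{ZFC_I^{<\omega}}$, apply \textsc{(ref*)} to obtain the universally quantified restricted consistency statement (i.e.\ $\mathtt{Con}(\mathtt{ZFC}^{<\omega})$) in $\mc{I}(\mathtt{ZFC_I^{<\omega}})$, and then use the $M$-provable small reflection fact $\mathtt{Prov}_{\mathtt{ZFC}^{<\omega}}(\corn{\mathtt{Con}(\dot{n},\mathtt{ZFC}^{<\omega})})$ together with \textsc{(inv$^M$)} to pull the commitment back to $\mc{I}(\mathtt{ZFC}^{<\omega})$. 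Your closing remarks on the $\Pi_2$-witnessing theorem as the bridge between elementary and $M$-reducibility match the caveats the paper itself flags.
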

Direct quantification over natural number terms enable us to obtain even stronger implicit commitments. Given the equivalence of  $\Pi_1$-Uniform and $\Pi_1$-Local Reflection,\footnote{We stated this above for $\forall\Delta_1^b$-reflection, but the proof applies with no modification to more standard classes of formulae.} and that the hierarchy of local reflection over theories extending our metatheory $M$ is strictly increasing in terms of logical strength -- $\mathtt{Rfn}_{\Pi_{n+1}}(\tau)$ strictly extends $\mathtt{Rfn}_{\Pi_{n}}(\tau)$, for $n\geq 1$ \cite{bek05} -- full local reflection is substantially stronger than mere consistency. Consider now the re-axiomatization of $\mathtt{ZFC}^{<\omega}$ via local reflection
\[
    x\in \mathtt{ZFC_{II}^{<\omega}} :\lra x\in \mathtt{ZFC}^{<\omega}\vee \exists n,\vphi\; x=\corn{\mathtt{Proof}_{\mathtt{ZFC}^{<\omega}}(\dot{n},\corn{\vphi})\ra\vphi}
\] 
In this formula, both $n$ and $\vphi$ range over finite ordinals in the sense of the metatheory $M$. 
Again, the argument outlined in Proposition \ref{prop:basicleni} can be adapted to obtain
\begin{proposition}
    $\mathtt{Rfn}({\mathtt{ZFC}^{<\omega}})$ is in the implicit commitments of $\mathtt{ZFC}^{<\omega}$ defined via \textsc{(inv$^M$)} and \textsc{(ref*)}. 
\end{proposition}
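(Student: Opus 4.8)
The plan is to run the template of the proof of Proposition \ref{prop:basicleni} essentially verbatim, with the uniform-reflection formula $\vphi$ there (which carries a free first-order variable) replaced by a \emph{fixed} $\lin$-sentence $\psi$, and with the role played by the numerals taken over by the naming function $\name$ on finite ordinals. Concretely, for each fixed $\psi$ I would work with the ``frozen'' reduct of $\mathtt{ZFC_{II}^{<\omega}}$ obtained by adding to $\mathtt{ZFC}^{<\omega}$ exactly the small-reflection axioms
\[
    \mathtt{Proof}_{\mathtt{ZFC}^{<\omega}}(\dot n,\corn{\psi})\ra \psi \qquad (n \text{ a finite ordinal}),
\]
that is, the instances of $\mathtt{ZFC_{II}^{<\omega}}$ in which the second existential witness is frozen to $\corn{\psi}$. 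Call this theory $\mathtt{ZFC}^{<\omega}_\psi$; it is a sub-re-axiomatization of $\mathtt{ZFC_{II}^{<\omega}}$, and like the $\tau'$ of Proposition \ref{prop:basicleni} it depends on the chosen $\psi$.

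First I would apply \textsc{(ref*)} to $\mathtt{ZFC}^{<\omega}_\psi$. Since $\psi$ is fixed, the expression $A_\psi(y):\lra (\mathtt{Proof}_{\mathtt{ZFC}^{<\omega}}(y,\corn{\psi})\ra \psi)$ is a genuine $\mc{L}^{<\omega}_\in$-formula with single free variable $y$, and $M$ proves that for every finite ordinal $y$ the code $\corn{A_\psi(\dot y)}$ is an axiom of $\mathtt{ZFC}^{<\omega}_\psi$. Hence \textsc{(ref*)} yields
\[
    \forall y\,(y \text{ a finite ordinal}\ra (\mathtt{Proof}_{\mathtt{ZFC}^{<\omega}}(y,\corn{\psi})\ra \psi)) \in \mc{I}(\mathtt{ZFC}^{<\omega}_\psi).
\]
Because every $\mathtt{ZFC}^{<\omega}$-proof is a finite ordinal in the sense of $M$, this sentence is equivalent over $M$ to the local-reflection instance $\mathtt{Prov}_{\mathtt{ZFC}^{<\omega}}(\corn{\psi})\ra \psi$. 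Second I would discharge the detour through $\mathtt{ZFC}^{<\omega}_\psi$ by \textsc{(inv$^M$)}: the added axioms are instances of the small reflection principle, which is $\mathtt{ZFC}^{<\omega}$-provable, and formalizing this inside $M$ gives $M\vdash \forall n\, \mathtt{Prov}_{\mathtt{ZFC}^{<\omega}}(\corn{\mathtt{Proof}_{\mathtt{ZFC}^{<\omega}}(\dot n,\corn{\psi})\ra \psi})$, so that $\mathtt{ZFC}^{<\omega}_\psi$ is $M$-reducible to $\mathtt{ZFC}^{<\omega}$. By \textsc{(inv$^M$)}, $\mc{I}(\mathtt{ZFC}^{<\omega}_\psi)\subseteq \mc{I}(\mathtt{ZFC}^{<\omega})$, whence the local-reflection instance for $\psi$ lies in $\mc{I}(\mathtt{ZFC}^{<\omega})$. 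As $\psi$ ranges over all sentences, the whole of $\mathtt{Rfn}(\mathtt{ZFC}^{<\omega})$ is captured.

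The main obstacle is not the logical skeleton, which is that of Proposition \ref{prop:basicleni}, but two features specific to the term-based, set-theoretic setting. First, one cannot uniformize over the mentioned sentence: the consequent of the small-reflection axiom \emph{uses} $\psi$ rather than merely mentioning its code, so in the absence of a truth predicate no single formula $A(y)$ can range over all $\vphi$ simultaneously, and \textsc{(ref*)} must be applied to each fixed $\psi$ separately. This is precisely why $\mathtt{ZFC_{II}^{<\omega}}$ enters only as an ambient re-axiomatization while the effective work is done in each $\mathtt{ZFC}^{<\omega}_\psi$; it is also the reason the argument yields local, not uniform, reflection. Second, the two formalized facts used above --- that small reflection is $\mathtt{ZFC}^{<\omega}$-provable and that the $A_\psi(\dot y)$ are genuinely axioms --- must be verified inside the weak metatheory $M$ using the naming function $\name$ and the $\Pi_2$-witnessing theorem of \cite{pet09}. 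It is this witnessing step that supplies the link between $M$-reducibility and the elementary reducibility underlying \textsc{(inv$^M$)}, and checking that the code-growth estimates survive the passage from $\sot$ to $M$ is where the care is concentrated.
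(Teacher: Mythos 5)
Your proposal is correct and follows essentially the same route as the paper, which itself only says that the argument of Proposition \ref{prop:basicleni} "can be adapted": re-axiomatize by small reflection, apply \textsc{(ref*)} with the proof variable in the role of the quantified finite ordinal, and discharge the re-axiomatization via the $\mathtt{ZFC}^{<\omega}$-provability of small reflection and \textsc{(inv$^M$)}. The only (cosmetic) difference is that you freeze each sentence $\psi$ into a separate sub-theory $\mathtt{ZFC}^{<\omega}_\psi$, whereas the paper works with the single re-axiomatization $\mathtt{ZFC_{II}^{<\omega}}$ in which $\vphi$ is quantified over codes of sentences (it is only \emph{mentioned} in the axiom description, so no uniformization problem arises at that stage); the application of \textsc{(ref*)} must still be made for each fixed $\psi$, exactly as you do.
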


 The previous observations cannot immediately be extended to full Uniform Reflection, since the relevant application of Proposition \ref{prop:basicleni}, and of \textsc{(ref)} in particular, in the proof of Uniform Reflection requires quantification over names for the entire domain of quantification. However, we can achieve restricted versions that still extend Local Reflection. Let 
\[
    \tag{$\mathtt{RFN}_\nat(\tau)$} \forall n \big(\mathtt{Prov}_\tau(\corn{\vphi(\dot{n})})\ra \vphi(n)\big)
\]
be a restricted version of Uniform Reflection that only quantifies over  finite ordinals. By the usual analysis,  $\mathtt{RFN}_\nat(\tau)$ is still stronger than $\mathtt{Rfn}(\tau)$, for $\tau$ that -- just like $\mathtt{ZFC}^{<\omega}$ -- {does not prove any false  $\Sigma_1$-claims in the language restricted to the finite ordinals}. 
\begin{proposition}
\textsc{(inv$^M$)} and \textsc{(ref*)} imply that $\mathtt{RFN}_\nat(\mathtt{ZFC}^{<\omega})$ is in the implicit commitments of $\mathtt{ZFC}^{<\omega}$.
\end{proposition}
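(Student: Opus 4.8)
The plan is to transpose the argument of Proposition~\ref{prop:basicleni} to the metatheory $M$, with the single modification that every substitution witnessing the small reflection principle is carried out by a \emph{name} $\name(n)$ of a finite ordinal. This is exactly what keeps the outcome at $\mathtt{RFN}_\nat$ rather than full $\mathtt{RFN}$: since $\name$ is defined only on the finite ordinals of $M$, the reflection we can extract via \textsc{(ref*)} can only quantify over them, matching the remark preceding the statement that full uniform reflection would require names for the entire domain.

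Fix an $\mc{L}^{<\omega}_\in$-formula $\vphi(v)$. First I would re-axiomatize $\mathtt{ZFC}^{<\omega}$ relative to $\vphi$ by setting
\[
    \sigma'(x):\lra x\in \mathtt{ZFC}^{<\omega}\vee \exists n\, x=\corn{\mathtt{Proof}_{\mathtt{ZFC}^{<\omega}}(\dot{n}_1,\corn{\vphi(\dot{n}_2)})\ra \vphi(\dot{n}_2)},
\]
where $n=(n_1,n_2)$ codes a pair of finite ordinals in the sense of $M$ (using the $M$-definable pairing available since $M\equiv\ea$), $n_1$ ranges over candidate proof codes and $\dot{n}_2=\name(n_2)$. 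Writing $A(y):\lra \mathtt{Proof}_{\mathtt{ZFC}^{<\omega}}(y_1,\corn{\vphi(\dot{y}_2)})\ra \vphi(y_2)$, the construction of $\sigma'$ gives, provably in $M$,
\[
    M\vdash \forall y\,(y\text{ a finite ordinal}\ra \sigma'(\corn{A(\dot{y})})).
\]
Applying the schematic form of \textsc{(ref*)} to $\sigma'$ and $A$ then places
\[
    \forall y\,(y\text{ a finite ordinal}\ra A(y))\in \mc{I}(\sigma'),
\]
and collapsing the universal quantifier over proof codes $y_1$ into the provability predicate shows this to be (equivalent to) the instance of $\mathtt{RFN}_\nat(\mathtt{ZFC}^{<\omega})$ for $\vphi$.

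It then remains to reduce $\sigma'$ back to $\mathtt{ZFC}^{<\omega}$ and invoke \textsc{(inv$^M$)}. The engine is the uniform small reflection principle, now carried out inside $M$: for each finite-ordinal pair $n$ the antecedent $\mathtt{Proof}_{\mathtt{ZFC}^{<\omega}}(\dot{n}_1,\corn{\vphi(\dot{n}_2)})$ is decided at $n_1$, so either its negation is a true $\Delta_0$-fact and hence provable, or $n_1$ literally proves $\vphi(\dot{n}_2)$; in either case $\mathtt{ZFC}^{<\omega}\vdash A(\dot{n})$, and the reasoning is uniform, yielding
\[
    M\vdash \forall n\, \mathtt{Prov}_{\mathtt{ZFC}^{<\omega}}(\corn{A(\dot{n})}).
\]
Since the remaining genuine axioms of $\sigma'$ are already axioms of $\mathtt{ZFC}^{<\omega}$, this delivers an $M$-reduction $\sigma'\leq_M \mathtt{ZFC}^{<\omega}$, so that \textsc{(inv$^M$)} gives $\mc{I}(\sigma')\subseteq \mc{I}(\mathtt{ZFC}^{<\omega})$ and hence the $\vphi$-instance of $\mathtt{RFN}_\nat(\mathtt{ZFC}^{<\omega})$ lies in $\mc{I}(\mathtt{ZFC}^{<\omega})$. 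As $\vphi$ was arbitrary, $\mathtt{RFN}_\nat(\mathtt{ZFC}^{<\omega})\subseteq \mc{I}(\mathtt{ZFC}^{<\omega})$.

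The main obstacle is not the logical skeleton, which copies Proposition~\ref{prop:basicleni} almost verbatim, but the bookkeeping needed to legitimize the two appeals to $M$. Concretely, I would have to check that the uniform small reflection step discharges in $M$ rather than $\ea$/$\sot$, and that the associated proof-transformation is witnessed by a definable function of the right complexity so that $\sigma'\leq_M \mathtt{ZFC}^{<\omega}$ is genuinely the set-theoretic counterpart of elementary reducibility: this is where the $\Pi_2$-witnessing theorem of \cite{pet09} and the provable totality and injectivity of $\name$ on the finite ordinals do the work, guaranteeing both that $\corn{\vphi(\dot{n}_2)}$ denotes the intended syntactic object and that the whole transplantation of the arithmetical argument into $M$ is faithful.
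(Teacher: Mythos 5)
Your proposal is correct and is essentially the paper's own argument: the paper gives no separate proof here but explicitly indicates that the result follows by adapting Proposition~\ref{prop:basicleni} to the metatheory $M$, which is exactly what you do — re-axiomatizing $\mathtt{ZFC}^{<\omega}$ with small-reflection instances over named finite ordinals, extracting the $\mathtt{RFN}_\nat$ instance via \textsc{(ref*)}, and reducing back via the $M$-provable small reflection principle and \textsc{(inv$^M$)}. Your closing remarks on why the quantifier stays restricted to finite ordinals and on the role of the $\Pi_2$-witnessing theorem in legitimizing $M$-reducibility match the paper's own caveats in that section.
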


\subsection{Clouds on the horizon}
As mentioned, a general formulation of Uniform Reflection would require names for all objects in $\mbb{V}$ as formal objects in the scope of our of quantifier. Fortunately, analogous demands are customary in certain approaches to the metamathematics of set theory.  Let $\linfty$ be defined as in \cite{dev17,fuj12} and $T\supseteq \kpomega$, where $\kpomega$ is Kripke-Platek set theory with the axiom of infinity as defined for instance in \cite{bar17}. {We need such a $T$ for the absoluteness of syntactic notions (in fact, only for the satisfaction relation, otherwise we need much less).} The usual syntactic notions become $\Delta_1^{{\kpomega}}$, hence absolute with respect to all transitive models. Again, we employ the same expressions to denote syntactic notions relative to the new metatheory.  The external language is just $\lin$, whereas the formalized language is $\linfty$, featuring a constant $\dot{x}$ for each external object $x$. Uniform Reflection then becomes:
\begin{equation}
    \tag{$\mathtt{RFN}^\infty(T)$} \{\forall x (\mathtt{Prov}_T(\corn{\vphi(\dot{x})})\ra \vphi(x))\sth \vphi(x)\in \lin\}
\end{equation}
It's important to notice that, even though $\mathtt{ZFC}$ is seen by our metatheory $\mathtt{KP}_\omega$ to be formulated in $\linfty$, its axioms do not directly employ the new constants. This enables one to have 
\begin{lemma}\label{lem:gencon}
For $T\supseteq\kpomega$ a $\linfty$-theory, we have that  $\kpomega\vdash \forall x\,\mathtt{Prov}_{T}(\corn{\vphi(\dot{x})})\ra \mathtt{Prov}(\corn{\forall x \,\vphi})$.  
\end{lemma}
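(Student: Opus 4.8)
The plan is to reduce the statement to the classical \emph{theorem on constants} (generalization on a fresh constant), formalized inside $\kpomega$. The decisive hypothesis, emphasized in the paragraph preceding the lemma, is that the axioms of $T$ are $\lin$-sentences and hence contain none of the new name-constants $\dot{x}$ of $\linfty$. Relative to $T$, each such $\dot{x}$ therefore behaves exactly like a free variable, so an instance $\vphi(\dot{x})$ can be ``un-instantiated'' without leaving the realm of $T$-provability. In fact this shows that already a single instance suffices: for any particular $x$, $\mathtt{Prov}_T(\corn{\vphi(\dot{x})})$ will entail $\mathtt{Prov}_T(\corn{\forall x\,\vphi})$, which is what makes the weak-looking hypothesis yield the full generalization.

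Here is the construction I would formalize. Reasoning in $\kpomega$, fix $x$ and assume a $T$-proof $p$ of $\vphi(\dot{x})$. Since $p$ is a set coding a finite object, the collection of variables occurring in $p$ is finite, so $\kpomega$ furnishes a variable $y$ fresh for $p$ and for $\vphi$. I would then define, by $\Sigma$-recursion, the object $p'$ obtained by replacing every occurrence of the constant $\dot{x}$ throughout $p$ with $y$, and verify the three syntactic facts on which the argument rests: (i) the replacement sends the end-formula $\vphi(\dot{x})$ to $\vphi(y)$, because $\vphi\in\lin$ carries no name-constants and its only $\dot{x}$-occurrences are those introduced by the substitution $x\mapsto\dot{x}$, with freshness of $y$ precluding capture; (ii) every axiom of $T$ appearing in $p$ is left untouched, since by hypothesis it contains no new constant, and so remains an axiom; and (iii) the logical inference rules are preserved under the uniform replacement of a constant by a fresh variable. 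Hence $p'$ is a $T$-proof of $\vphi(y)$. Appending a single generalization step --- legitimate because $y$ is free in no axiom, the axioms being sentences --- yields a $T$-proof of $\forall y\,\vphi(y)$, i.e.\ of $\forall x\,\vphi$ after renaming the bound variable, establishing $\mathtt{Prov}_T(\corn{\forall x\,\vphi})$.

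It then remains only to observe that the whole transformation is available \emph{inside} $\kpomega$. The syntactic predicates and operations on $\linfty$-expressions are $\Delta_1^{\kpomega}$ and absolute, as recorded above; the map $p\mapsto p'$ is given by a $\Sigma$-recursion that $\kpomega$ can carry out, and the verifications (i)--(iii), being assertions about bounded syntactic manipulations, are provable there. Discharging the assumption on $x$ then delivers $\kpomega\vdash\forall x\,(\mathtt{Prov}_T(\corn{\vphi(\dot{x})})\ra\mathtt{Prov}_T(\corn{\forall x\,\vphi}))$, as required.

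I expect the main obstacle to be a faithful formalization of step (iii) --- that replacing a constant by a fresh variable sends correct derivations to correct derivations --- together with the bookkeeping in (i) needed to rule out variable capture. These are precisely the points where the choice of proof calculus and the coding of $\linfty$-syntax over the proper-class-many constants matter, and where the absoluteness of the syntactic notions in $\kpomega$ must be invoked so that the constructed $p'$ is certified as a proof within the metatheory and not merely externally.
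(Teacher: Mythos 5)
Your proposal is correct and takes essentially the same route as the paper: the paper's proof simply observes that the new constants do not occur in the non-logical axioms of $T$ and invokes the admissible rule that $\Gamma \vdash \vphi(c/v)$ yields $\Gamma \vdash \forall x\,\vphi$ for $c$ not occurring in $\Gamma$ (the theorem on constants), formalized in $\kpomega$. Your constant-to-fresh-variable replacement argument is just the standard proof of that rule's admissibility, spelled out in more detail, including the correct observation that a single instance already suffices.
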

\begin{proof}
Constants do not appear in non-logical axioms of $T$. It is an admissible rule of first-order logic that $\Gamma \vdash \vphi(c/v)$ only if $\Gamma \vdash \forall x\vphi$ for $c$ not occurring in $\Gamma$. The claim is the formalization in $\kpomega$ of this rule.
\end{proof}
As an immediate corollary, we obtain:
\begin{corollary}
    For $T\supseteq \kpomega$ as above, the distinction between Uniform and Local Reflection collapses. 
\end{corollary}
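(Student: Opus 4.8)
The plan is to establish the two inclusions that witness the collapse, namely that over $\kpomega$ each of $\mathtt{Rfn}(T)$ and $\mathtt{RFN}^\infty(T)$ proves the other; the genuine content lies in deriving uniform from local reflection, and it is delivered by Lemma \ref{lem:gencon}.

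First I would dispose of the trivial direction. Every instance of $\mathtt{Rfn}(T)$ is already an instance of $\mathtt{RFN}^\infty(T)$: taking $\vphi(x)$ to be a sentence, i.e.\ a formula in which $x$ does not occur free, the expressions $\corn{\vphi(\dot{x})}$ and $\vphi(x)$ collapse to $\corn{\vphi}$ and $\vphi$, so the uniform instance is literally $\mathtt{Prov}_T(\corn{\vphi})\ra\vphi$. Hence $\kpomega+\mathtt{RFN}^\infty(T)\vdash\mathtt{Rfn}(T)$ with nothing further to check. For the substantial direction I would reason inside $\kpomega+\mathtt{Rfn}(T)$, fixing an arbitrary $\vphi(x)\in\lin$ with the single free variable $x$; the target is the uniform instance $\forall x(\mathtt{Prov}_T(\corn{\vphi(\dot{x})})\ra\vphi(x))$. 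Fix $x$ and assume $\mathtt{Prov}_T(\corn{\vphi(\dot{x})})$. Since the canonical name $\dot{x}$ does not occur in the non-logical axioms of $T$, Lemma \ref{lem:gencon} lets me pass from the provability of the instance $\vphi(\dot{x})$ to the provability of the generalisation $\mathtt{Prov}_T(\corn{\forall x\,\vphi})$. Crucially, $\forall x\,\vphi$ is an $\lin$-sentence, as $\vphi\in\lin$ has only $x$ free and carries no domain constants, so it is a legitimate instance of $\mathtt{Rfn}(T)$; applying local reflection yields $\forall x\,\vphi$, and instantiating at the fixed $x$ gives $\vphi(x)$. Discharging the assumption and generalising over $x$ produces the desired uniform instance, whence $\kpomega+\mathtt{Rfn}(T)\vdash\mathtt{RFN}^\infty(T)$ and the two principles coincide over $\kpomega$.

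The step I expect to be delicate is the legitimacy of the appeal to Lemma \ref{lem:gencon} under a free assumption: the lemma is a statement about $T$-provability, and I must be sure that the fresh-constant generalisation it formalises can be applied to the single object $x$ currently in hand, rather than requiring the hypothesis simultaneously for every object. This is exactly where the syntactic observation underlying Lemma \ref{lem:gencon} --- that the constants $\dot{x}$ never appear in the axioms of $T$ --- does the work: freshness is a property of each individual name, so the implication $\mathtt{Prov}_T(\corn{\vphi(\dot{x})})\ra\mathtt{Prov}_T(\corn{\forall x\,\vphi})$ holds pointwise in $x$, which is all the reflection argument consumes. Everything else reduces to the routine absoluteness of the syntactic apparatus over $\kpomega$ that was already secured when $\linfty$ and its coding were put in place.
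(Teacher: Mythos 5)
Your argument is correct and is exactly the route the paper intends: the corollary is stated as an immediate consequence of Lemma \ref{lem:gencon}, whose proof (generalization on a fresh constant applied to a single $T$-proof of $\vphi(\dot{x})$) indeed yields the pointwise implication $\mathtt{Prov}_T(\corn{\vphi(\dot{x})})\ra\mathtt{Prov}_T(\corn{\forall x\,\vphi})$ that you combine with the local reflection instance for the sentence $\forall x\,\vphi$. Your explicit treatment of the trivial converse inclusion and of the pointwise applicability of the lemma merely spells out what the paper leaves implicit.
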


The previous results cast doubt on the prospect of applying our framework for implicit commitment to study the implicit commitment of theories formulated in languages augmented with ``domain constants''. For definiteness, we keep considering theories in $\linfty$ extending $\kpomega$, but our considerations are likely to generalize to analogous settings. 

It is unproblematic to formulate a suitable notion of proof-theoretic reducibility. Instead of p-time or elementary reducibility, we could consider proof-transformations in $\kpomega$ itself. Let $\tau,\tau'$ be $\Delta_1^{{\kpomega}}$-presentations of theories. We let:
\begin{align*}
    &\tau\leq_{\kpomega}\!\tau' :\Lra   \kpomega\vdash \mathtt{Proof}_\tau (y,x)\ra \mathtt{Proof}_\tau'(f(y),x)
\end{align*}
where $f$ is $\kpomega$-definable. Similarly, the idea that implicit commitments are preserved by one's preferred notion of proof-theoretical reducibility can be adequately formulated in the new framework as
\[
 \tag{\textsc{inv$^\infty$}} \tau \leq_{\kpomega}\! \tau' \Ra \mc{I}(\tau')\subseteq \mc{I}(\tau)
\]
Unsurprisingly, troubles arise when we consider the impact of adding the reflection principle
\[
  \tag{\textsc{ref$^\infty$}} \kpomega \vdash \forall x\, \tau (\corn{\vphi(\dot{x})}) \Ra \forall x\vphi \in \mc{I}(\tau).
\]
In light of Lemma \ref{lem:gencon}, we cannot hope to get a substantial logical strength, as \textsc{(ref$^\infty$)} becomes admissible under minimal assumptions:
\begin{proposition}\label{prop:admrul}
    Let $\tau$ be $\Delta_1^{\kpomega}$. Suppose that if %$\vphi\in \mc{I}(\tau)$ and
    $
        \kpomega\vdash \mathtt{Prov}_\tau(\corn{\vphi})
        %\ra \mathtt{Prov}_\tau(\corn{\psi}),
   $
    also $\vphi\in \mc{I}(\tau)$. Then \textsc{(ref$^\infty$)} becomes an admissible rule. 
    
\end{proposition}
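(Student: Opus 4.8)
The plan is to show that any application of \textsc{(ref$^\infty$)} can be simulated by the weaker closed-sentence rule assumed in the hypothesis, so that \textsc{(ref$^\infty$)} adds nothing new and is therefore admissible. The engine of the argument is Lemma \ref{lem:gencon}, which converts universally instantiated provability into provability of the universal closure.

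First I would unwind the premise of \textsc{(ref$^\infty$)}. Suppose $\kpomega \vdash \forall x\, \tau(\corn{\vphi(\dot{x})})$, i.e.\ $\kpomega$ proves that every instance $\vphi(\dot{x})$ is an axiom of $\tau$. Since being an axiom trivially entails being provable -- and the implication $\forall x(\tau(\corn{\vphi(\dot{x})}) \ra \mathtt{Prov}_\tau(\corn{\vphi(\dot{x})}))$ is verifiable in $\kpomega$ by the basic properties of the canonical provability predicate -- we obtain $\kpomega \vdash \forall x\, \mathtt{Prov}_\tau(\corn{\vphi(\dot{x})})$.

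Next I would apply Lemma \ref{lem:gencon} with $T := \tau$, which is legitimate since in the present setting $\tau \supseteq \kpomega$ and its non-logical axioms do not mention the domain constants. This yields $\kpomega \vdash \mathtt{Prov}_\tau(\corn{\forall x\,\vphi})$. Crucially, $\forall x\,\vphi$ is now a \emph{closed} sentence, so the hypothesis of the Proposition applies to it directly: from $\kpomega \vdash \mathtt{Prov}_\tau(\corn{\forall x\,\vphi})$ we conclude $\forall x\,\vphi \in \mc{I}(\tau)$. This is exactly the conclusion of \textsc{(ref$^\infty$)}, so whenever its premise is met its conclusion already follows from the weaker rule; hence \textsc{(ref$^\infty$)} is admissible.

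I expect no serious obstacle: the content is carried entirely by Lemma \ref{lem:gencon}, and the only points requiring care are (a) checking that axiomhood implies provability \emph{$\kpomega$-verifiably and uniformly in $x$}, so that the implication can be composed with the premise under the outer universal quantifier, and (b) confirming that $\tau$ falls within the scope of Lemma \ref{lem:gencon}, i.e.\ that the domain constants $\dot{x}$ genuinely do not occur in its axioms. Both are immediate in the chosen framework of $\linfty$-theories extending $\kpomega$, which is precisely why the framework degenerates here.
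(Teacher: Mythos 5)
Your proof is correct and follows exactly the paper's own argument: pass from axiomhood to provability under the universal quantifier, apply Lemma \ref{lem:gencon} to obtain $\kpomega\vdash \mathtt{Prov}_\tau(\corn{\forall x\,\vphi})$, and then invoke the closure hypothesis on $\mc{I}(\tau)$. The additional care you take over points (a) and (b) is sensible but matches what the paper leaves implicit; there is no substantive difference.
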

\begin{proof}
    Assume that  $\kpomega \vdash \forall x\, \tau (\corn{\vphi(\dot{x})})$. Then also  $\kpomega \vdash \forall x\, \mathtt{Prov}_{\tau} (\corn{\vphi(\dot{x})})$. By Lemma \ref{lem:gencon}, $\kpomega\vdash \mathtt{Prov}_\tau(\corn{\forall x \vphi})$. By our assumption on the closure of $\mc{I}$ under provability, we get that $\forall x \vphi \in \mc{I}(\tau)$, as wanted.
\end{proof}

We dubbed the assumptions in Proposition \ref{prop:admrul} `minimal'. This claim can be made precise by noticing that a set $\mc{I}$ satisfying the assumption in the Proposition is the set of $\tau$-provable formulae. Therefore, since this interpretation is also sufficient to validate \textsc{inv$^\infty$}, we have
\begin{corollary}
    The commitments $\mc{I}(\tau)$ defined only by the assumptions in Proposition 
    \ref{prop:admrul}, \textsc{(inv$^\infty$)} and \textsc{(ref$^\infty$)} are reducible to $\tau$ itself. 
\end{corollary}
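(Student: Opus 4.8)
The plan is to mirror the ``reduction to $\tau$'' move of Section~\ref{sec:basic}: instead of deriving new commitments, I would exhibit one concrete interpretation of $\mc{I}$ that satisfies all three principles and is conservative over $\tau$, thereby showing that in the $\linfty$-setting they license nothing beyond $\tau$. The candidate is the deductive closure
\[
\mc{I}(\tau):=\{\vphi\in\linfty\sth \tau\vdash\vphi\}.
\]
The first step is the key identification $\mc{I}(\tau)=\{\vphi\sth\kpomega\vdash\mathtt{Prov}_\tau(\corn{\vphi})\}$: the inclusion from right to left is $\Sigma_1$-soundness of $\kpomega$ (as $\mathtt{Prov}_\tau$ is $\Sigma_1$), and the converse is $\Sigma_1$-completeness. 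Everything else turns on this collapse of $\kpomega$-verified provability to genuine provability.

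Second, I would verify the three constraints for this $\mc{I}$. The provability-closure hypothesis of Proposition~\ref{prop:admrul} is immediate from the identification: $\kpomega\vdash\mathtt{Prov}_\tau(\corn{\vphi})$ gives $\tau\vdash\vphi$, i.e.\ $\vphi\in\mc{I}(\tau)$. Given this, \textsc{(ref$^\infty$)} needs no separate check, since Proposition~\ref{prop:admrul} already establishes that under exactly this hypothesis \textsc{(ref$^\infty$)} is admissible. For \textsc{(inv$^\infty$)} I would use that a witness $f$ for $\tau\leq_{\kpomega}\tau'$ transforms $\tau$-proofs into $\tau'$-proofs; by $\Sigma_1$-soundness this transformation is genuine, so the theorem sets are included accordingly and the deductive-closure interpretation respects $\leq_{\kpomega}$.

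Third, the conclusion is then immediate: $\mc{I}(\tau)$ is the set of theorems of $\tau$, hence deductively equivalent to, and a fortiori reducible to, $\tau$. Moreover the identification shows this $\mc{I}$ is the \emph{least} operator meeting the provability-closure hypothesis, so no interpretation constrained only by the three principles can exceed it; this is what licenses the reading of the corollary that the principles, jointly, pin nothing down above $\tau$.

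I expect the only real obstacle to be conceptual: making precise that $\Sigma_1$-soundness (and, for the minimality claim, $\Sigma_1$-completeness) of $\kpomega$ is available and is all that is needed, since it is exactly this collapse -- already behind Lemma~\ref{lem:gencon} -- that drains \textsc{(ref$^\infty$)} of content. A minor point to watch is the monotonicity direction in \textsc{(inv$^\infty$)}: one must check that proof-reducibility yields theorem inclusion in the sense the principle requires, so that the deductive-closure operator tracks the reducibility ordering rather than inverting it.
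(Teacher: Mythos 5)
Your proposal is correct and follows essentially the same route as the paper, which likewise observes that the minimal $\mc{I}$ satisfying the hypothesis of Proposition~\ref{prop:admrul} is exactly the set of $\tau$-provable formulae (your $\Sigma_1$-soundness/completeness identification) and that this trivial interpretation also validates \textsc{(inv$^\infty$)}, so the three principles jointly reduce to $\tau$. Your closing caution about the monotonicity direction is well taken, since \textsc{(inv$^\infty$)} as printed has its consequent reversed relative to the original \textsc{(inv)}, but under the intended reading the deductive-closure operator validates it exactly as you describe.
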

\begin{remark}
When interpreting $\mc{I}(\tau)$ in $\tau$, we are using `reducible' instead of `relatively interpretable', because strictly speaking $\mc{I}(\tau)$ is not part of the signature. 
\end{remark}

So far we considered theories formulated in $\linfty$. It may be objected that this is not general enough to discourage the development of a theory of implicit commitments via domain constants expansions.  However, it's important to emphasize that the  results of this section generalize to any theory formulated in a language featuring expansions with domain constants: it's only required that the new constants do not appear in the axioms of the theories whose implicit commitment is under scrutiny. 

We consider these results as convincing arguments against developing an adequate theory of implicit commitment in the setting with domain constants, and move to alternative proposals. 
%%%%%%%%%%%%%%OLD MATERIAL%%%%%%%%%%%%%%%%%%%%%%%%%%%%%%%%%%%

% 
% \begin{definition}
%     Let $\tau,\tau'$ be $\Delta_1^{\kpomega}$.
%     \begin{align}
%         \tag{\textsc{invariance}}& \tau \leq_{\kpomega}\! \tau' \Ra \mc{I}(\tau')\subseteq \mc{I}(\tau)\\
%         \tag{\textsc{reflection}} &\kpomega \vdash \forall x\, \tau (\corn{\vphi(\dot{x})}) \Ra \forall x\vphi \in \mc{I}(\tau).
%     \end{align}
% \end{definition}

% \begin{proposition}
%     With $\kpomega\subseteq \tau$: $\mathtt{RFN}^\infty\subseteq \mc{I}(\tau)$.
% \end{proposition}
% \begin{proof}
%     Still small reflection, but one requires
%     \begin{itemize}
%         \item Provable $\Sigma_1$-completeness of $\tau$ in $\kpomega$. 
%         \item $\tau':\lra x\in \kpomega \vee \exists y\leq x \, x=\corn{\mathtt{Proof}_\tau(y_1,\corn{\vphi(y_2)})\ra \vphi(y_2)}$
%     \end{itemize}
% \end{proof}

% \bigskip
% \noindent \fbox{%
%   \parbox{\textwidth}{

% The semi-formal idea. Suppose we had: 
% if $\vphi \in \mc{I}(\tau)$ and $\ea \vdash \mathtt{Prov}_{\tau}(\vphi)\ra \mathtt{Prov}_\tau(\psi)$, then $\psi\in \mc{I}(\tau)$. This corresponds to our idea of making $\mc{I}$ closed under provability. Here we are assuming that $\tau$ is formulated in a language with no terms but ``domain constants''. To get {\sc reflection}, assume $\ea \vdash \tau(\vphi(\dot{x}))$, then -- by a further assumption -- $\vphi(\dot{x})\in \mc{I}(\tau)$. By closure under provability, $\forall x \vphi \in \mc{I}(\tau)$. 
%   }%
% }
% \bigskip

%%%%%%%%%%%%%%%%%%%%%%%%%%%%%%%%%%%%%%%%%

\section{Generalizing Invariance and Reflection}\label{sec:genict}

In this section we introduce a more promising generalization of the theory of implicit commitment from \cite{leni22}. As it will be clear  shortly, much of the discussion will rotate around choosing the right notion of invariance. In effect,  much of the discussion  will involve a notion of implicit commitment for arbitrary first-order theories $\tau$, \emph{given a fixed intepretation of $\sot$ in it}. Equivalently, one can think of implicit commitments as relative to pairs $(\tau,N)$, where $N\colon \sot \to \tau$. 

\subsection{Invariance}

When trying to generalize the notion of implicit commitment, one faces forces that pull in opposite directions. On the one hand, it is reasonable to require that one can compare implicit commitments of theories that are formulated in different languages -- but that are nonetheless logically comparable. On the other hand, it is also reasonable not to impose that the implicit commitments of one theory encompass statements that belong to an open-ended class of languages. 
To liberalize \textsc{(inv)} to $\tau$ and $\tau'$ formulated in different signatures, there are two parameters to consider. The first is the notion of proof-theoretic reducibility appearing in the antecedent of \textsc{(inv)}, the second is the relation of inclusion employed in its consequent. As to the former, one can resort to generalizations of proof-theoretic reducibility. Relative interpretability naturally suggests itself. As a first approximation, one might consider the principle
\[
    \tag{\textsc{inv0}} \tau \vtl \tau' \Ra \mc{I}(\tau)\subseteq \mc{I}(\tau'), 
\]
where $\tau \vtl \tau'$ abbreviates `there is a $K\colon \tau \to \tau'$'.
It's clear that \textsc{inv0} is a non-starter. Consider the following classical result:
\begin{lemma}[Feferman]\label{lem:fefinc}
    Let $\tau$ be a $\Delta_0^b$-theory. Suppose there is a relative interpretation $N\colon \sot \to \tau$. Then there is a $K\colon \tau +\neg \mathtt{Con}^N(\tau)\to \tau$. 
\end{lemma}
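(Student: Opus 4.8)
The plan is to build the required interpretation $K$ by a definable \emph{interpretation by cases}, using as switch the sentence $\mathtt{Con}^N(\tau)$ and reasoning throughout inside $\tau$ via the interpretation $N$ of $\sot$, which supplies all the arithmetized syntax. The two substantive ingredients are a formalized second incompleteness theorem for the canonical provability predicate and a formalized (arithmetized) completeness theorem. The whole point of splitting on $\mathtt{Con}^N(\tau)$ is that, although $\tau$ cannot prove its own consistency, it \emph{can} assume $\mathtt{Con}^N(\tau)$ hypothetically in one branch, and it is exactly this hypothesis that licenses the completeness-theorem construction.

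Concretely, I would reason in $\tau$ and argue by cases. In the \emph{inconsistency branch}, under the hypothesis $\neg\mathtt{Con}^N(\tau)$ every axiom of $\tau+\neg\mathtt{Con}^N(\tau)$ -- the axioms of $\tau$ together with $\neg\mathtt{Con}^N(\tau)$ itself -- is literally at hand, so the identity translation $K_0$ validates all of them, i.e.\ $\tau\vdash \neg\mathtt{Con}^N(\tau)\ra \psi$ for each such axiom $\psi$. In the \emph{consistency branch} I would invoke the formalized second incompleteness theorem to obtain $\tau\vdash \mathtt{Con}^N(\tau)\ra \mathtt{Con}^N(\tau+\neg\mathtt{Con}^N(\tau))$: were $\tau+\neg\mathtt{Con}^N(\tau)$ inconsistent, then $\tau\vdash\mathtt{Con}^N(\tau)$, contradicting the formalized G2. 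Setting $U:=\tau+\neg\mathtt{Con}^N(\tau)$, I would then feed the now-available $\mathtt{Con}^N(U)$ into the arithmetized completeness theorem to produce a $\tau$-definable structure $\mc{M}_U$ with a definable satisfaction relation, giving a translation $K_1$ with $\tau\vdash \mathtt{Con}^N(\tau)\ra \psi^{K_1}$ for every axiom $\psi$ of $U$.

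To finish I would combine $K_0$ and $K_1$ into a single interpretation $K$ with domain and relation clauses of the form $(\neg\mathtt{Con}^N(\tau)\wedge \delta_{K_0})\vee(\mathtt{Con}^N(\tau)\wedge \delta_{K_1})$, so that $\psi^K\leftrightarrow(\neg\mathtt{Con}^N(\tau)\wedge \psi^{K_0})\vee(\mathtt{Con}^N(\tau)\wedge \psi^{K_1})$ for sentences $\psi$. The two branches give $\tau\vdash\neg\mathtt{Con}^N(\tau)\ra\psi^{K_0}$ and $\tau\vdash\mathtt{Con}^N(\tau)\ra\psi^{K_1}$, whence $\tau\vdash\psi^K$ for every axiom $\psi$ of $U$; since $K$ commutes with the logical apparatus, $U\vdash\phi$ implies $\tau\vdash\phi^K$, which is exactly $K\colon \tau+\neg\mathtt{Con}^N(\tau)\to\tau$.

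The conceptual skeleton above is routine; the real work is discharging the two formalized metatheorems over the weak base $\sot$. Formalized G2 only needs the canonical $\exists\Delta_1^b$-provability predicate to satisfy the Hilbert--Bernays--L\"ob derivability conditions, which is standard. The genuinely delicate step -- and what I expect to be the main obstacle -- is the arithmetized completeness theorem in a \emph{bounded} setting: $\sot$ does not prove totality of exponentiation, so one cannot naively define a full satisfaction predicate for the Henkin model. This is precisely where the sequentiality of $\sot$ in the sense of \cite{pud85} is used: following Pudl\'ak's cut-shortening techniques, one carries out the Henkin construction and the satisfaction definition along a suitable $\tau$-definable cut, exploiting that a sequential theory codes finite sequences and can shorten cuts to recover the needed closure conditions. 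Everything else is bookkeeping with the interpretation-by-cases construction.
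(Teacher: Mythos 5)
The paper states this lemma without proof, citing it as Feferman's classical result, so there is no in-paper argument to compare against; your proposal is precisely the standard proof of that result (disjoint interpretation by cases on $\mathtt{Con}^N(\tau)$, the identity translation on the inconsistency branch, formalized G\"odel~2 plus the arithmetized completeness theorem on the consistency branch). The argument is correct, and you rightly isolate the only genuinely delicate point in this setting — carrying out the Henkin construction over the weak base $\sot$ via a definable cut and sequentiality \`a la Pudl\'ak, with all the syntactic work happening inside the $N$-image — so this fills the gap the paper leaves to the literature.
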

\noindent \textsc{inv0}, if taken seriously, would entail that $\tau$ would display a canonical statement of its inconsistency among its implicit commitments (assuming, of course, that $\mc{I}(\tau)$ contains $\tau$). 

One can try to replace the relation of inclusion in the consequent of \textsc{inv0}  with relative interpretability:
\[
    \tag{\textsc{inv1}} \tau \vtl \tau' \Ra \mc{I}(\tau)\vtl \mc{I}(\tau').
\]
This version of invariance would be inadequate because it clashes with our original motivation of generalizing the framework introduced in Section \ref{sec:basic}.  
 In particular, we would like \textsc{inv} to be a special case of \textsc{inv1} so that, when coupled with a suitable notion of \textsc{reflection}, Proposition \ref{prop:basicleni} could be obtained for the degenerate case of theories in the same signature and ordered by simple inclusion. But now consider $\pa+\neg\mathtt{Con}(\pa)$. By Proposition \ref{prop:basicleni}, 
 \beq\label{eq:inconnconpa}
    \mathtt{RFN}(\pa+\neg \mathtt{Con}(\pa))\in \mc{I}(\pa+\neg\mathtt{Con}(\pa)).
 \eeq
 Therefore, $\mc{I}(\pa+\neg\mathtt{Con}(\pa))$ is inconsistent. By applying Lemma \ref{lem:fefinc} and $\textsc{inv1}$ we would then obtain that $\mc{I}(\pa)$ is inconsistent because it interprets an inconsistent theory.  %\todo{Is this really the case? We would obtain only that $\mathcal{J}(\tau)$ interprets the inconsistency of $\tau$ and this is always the case, because $\mathcal{J}(\tau)$ proves the consistency of $\tau$, so it proves the consistency of $\tau+\neg\mathtt{Con}(\tau)$.} 

Even if one moved to the stricter notion of \emph{faithful interpretability} (cf.~\S\ref{sec:thre}), we would not be able to overcome this potential unsoundness of implicit commitments. Consider the principle
\[
    \tag{\textsc{inv2}} \tau \vtl_f \tau' \Ra \mc{I}(\tau)\subseteq \mc{I}(\tau'),
\]
where $\tau\vtl_f \tau'$ now abbreviates `there is a faithful $K\colon \tau \to \tau'$'.  It is known that there are widely employed theories, such as $\pa$ that faithfully interpret their own inconsistency -- see \cite[Thm.~5.5.]{vis05}. So, once again, \textsc{(inv2)} would entail the unsoundness of the implicit commitments of theories such as $\pa$. Moreover, the reasoning employed to rule out \textsc{(inv1)} can also be employed against 
    \[
    \tag{\textsc{inv3}} \tau \vtl_f \tau' \Ra \mc{I}(\tau)\vtl_f \mc{I}(\tau').
\]

Since we aim to generalize \emph{necessary} conditions for implicit commitment, it's reasonable to move to strict notions of theoretical reducibility that do not allow for the reduction of unsoundness assertions.  The main intuition that we follow is that theories that are ``the same'' -- in an adequate formal sense -- should have ``the same'' commitments. Since we consider bi-interpretability (cf.~Introduction) between theories as a robust notion of sameness of theories, we employ it in our preferred generalization of invariance. Some work is required, however, to realize this intuition.  An immediate issue is that \textsc{inv} concerns ``sameness'' of commitments only in a derivative sense, as the consequence of the combination of two inclusions of commitments (and of two relations of proof-theoretic reducibility). The immediate analogue of the notion of proof-theoretic reduction when theory-identity is intended via bi-interpretability is the notion of \emph{retract} introduced in Section \ref{sec:thre}.

When attempting to reformulate invariance criteria by means of the notion of retract, one can immediately see  that the principle
\[
\tag{\textsc{inv4}} \tau \vtl_r \tau' \Ra \mc{I}(\tau) \subseteq \mc{I}(\tau'),
\]
where $\tau \vtl_r \tau'$ denotes the relation occurring when $\tau'$ is a retract of $\tau$, is not adequate. Although the unsoundness charges to the previous, attempted generalizations of invariance are addressed, \textsc{(inv4)} suffers from other shortcomings.
\begin{obse}
Under \textsc{(inv4)}, there are consistent $\tau,\tau'$ that are bi-interpretable but whose (identical) implicit commitments are inconsistent. 
\end{obse}
\begin{proof}
A simple example involves theories in the signature $\{P\}$, for $P$ a unary predicate letter, and such that the only axiom of $\tau$ is $\forall x P x$, and the only axiom of $\tau'$ is $\forall x \neg P x$. The interpretation witnessing the bi-interpretability (in both directions) simply sends $P$ to $\neg P$. 

Under the assumption that, for an arbitrary $\sigma$, $\mc{I}(\sigma)$ includes the nonlogical axioms of $\sigma$ at least, \textsc{(inv4)} entails that $\mc{I}(\tau)=\mc{I}(\tau')$ and  therefore that $\mc{I}(\tau)$ is inconsistent. 
\end{proof}
\begin{remark}
\cite{nic22} discusses more principled cases of a similar kind. Two common variants of Kripke-Feferman truth, one whose truth predicate is consistent, the other is complete. The theories are mutually inconsistent, due to the Liar paradox. They are also bi-interpretable (and, in fact, definitionally equivalent). 
\end{remark}

An obvious variant of \textsc{(inv4)} is inspired by the principle that bi-in\-ter\-pre\-ta\-ble theories should have \emph{bi-interpretable} commitments. When breaking down this intuition,  it would result in the principle that $\tau$'s being a retract of $\tau'$ is sufficient for the commitments of $\tau'$ to be retractions of the ones of $\tau$. 
\[
    \tag{\textsc{inv$^+$}} \tau' \vtl_r \tau \Ra \mc{I}(\tau')\vtl_r \mc{I}(\tau). 
\]
Unfortunately, this is not yet a direct extension of the original \textsc{(inv)}. The original \textsc{(inv)} displayed a uniform reduction on both sides of the conditional; through the lens of relative interpretability, the very same (identity) interpretation holds on both sides. By contrast, \textsc{(inv$^+$)} allows for different interpretations to be employed. 

\begin{example}\label{exa:ctaca}
The theory $\mathtt{CT}$ (cf.~Section \ref{sec:atot}) is a retract of $\mathtt{ACA}$, where {$\mathtt{ACA}$ is the extension of the well-known predicative system $\mathtt{ACA}_0$ with full induction for arbitrary formulae of the language $\mc{L}_2$ of second-order arithmetic.}\footnote{For a published proof of this result, due Ali Enayat and Albert Visser, see \cite{nic17}.} The interpretations that are employed in the result are: 
\begin{itemize}
    \item $\mathtt{K}\colon \mathtt{CT} \to \mathtt{ACA}$, which leaves the arithmetical vocabulary unchanged, and such that $\T^K x$ is `there is a truth class $X$ for $x$ and $x\in X$'. 
    \item $\mathtt{L} \colon \mathtt{ACA}\to \mathtt{CT}$, which also preserves the arithmetical primitives, and such that $x\in^\mathtt{L} X$ is `$X$ is a formula of $\lnat$ with one free variable and $X$ is true of $x$'. 
\end{itemize}   
In studying the implicit commitment of $\mathtt{CT}$ and $\mathtt{ACA}$, we would like to keep the interpretations fixed, not least because we would like to preserve their nice feature of keeping potential implicit commitments in $\lnat$ fixed. 
\end{example}

The example motivates a notion of invariance of implicit commitments that  preserves the relevant interpretations. We write $\tau \vtl_r^{\mathtt{K},\mathtt{L}} \tau'$ for `the interpretations $\mathtt{K}$ and $\mathtt{L}$ witness that $\tau'$ is a retract of $\tau$'. 
\[
    \tag{\textsc{uniform invariance}} \tau \vtl_r^{\mathtt{K},\mathtt{L}} \tau' \Ra \mc{I}(\tau)  \vtl_r^{\mathtt{K},\mathtt{L}} \mc{I}(\tau').
\]
Obviously, \textsc{uniform invariance} -- henceforth, \textsc{(ui)} -- is not sufficient for obtaining non-trivial implicit commitments.  Again, by letting $\mc{I}(\tau)$ to be $\tau$ itself (for arbitrary $\tau$), we obtain
\begin{lemma}
    \textsc{Uniform invariance} can be conservatively interpreted in any $\tau \supseteq \sot$. 
\end{lemma}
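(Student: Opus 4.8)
The plan is to exhibit one concrete reading of the operator $\mc{I}$ that validates \textsc{(ui)} while collapsing the implicit commitments of any $\tau \supseteq \sot$ back onto $\tau$ itself. Following the hint already recorded before the statement, I would take the \emph{trivial} interpretation sending every theory to itself, i.e.\ set $\mc{I}(\sigma) := \sigma$ for each $\sigma$. Since this assignment maps theories to theories and, for $\tau \supseteq \sot$, all of the coding and interpretability apparatus of \S\ref{sec:thre} is available, it is a legitimate reading of $\mc{I}$ within the present framework; the only things left to check are that \textsc{(ui)} comes out true and that nothing beyond $\tau$ is thereby committed to.

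For the verification of \textsc{(ui)}, I would suppose $\tau \vtl_r^{\mathtt{K},\mathtt{L}} \tau'$, i.e.\ that the interpretations $\mathtt{K}$ and $\mathtt{L}$ witness that $\tau'$ is a retract of $\tau$. Under the trivial reading we have $\mc{I}(\tau) = \tau$ and $\mc{I}(\tau') = \tau'$, so the consequent $\mc{I}(\tau) \vtl_r^{\mathtt{K},\mathtt{L}} \mc{I}(\tau')$ is, letter for letter, the antecedent $\tau \vtl_r^{\mathtt{K},\mathtt{L}} \tau'$. The very same witnesses $\mathtt{K},\mathtt{L}$ therefore establish the consequent, and \textsc{(ui)} holds — indeed as an equivalence — with no further work. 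Because the interpretation is the identity, there is no need to transform the retract data, and in particular no soundness argument of the kind used to reduce \textsc{(inv)} in the basic setting is required, since here the relations flanking the implication coincide rather than merely correspond. This is precisely why fixing the interpretations in \textsc{(ui)}, rather than letting them vary as in \textsc{inv$^+$}, makes the trivial reading go through unproblematically.

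It then remains to read off conservativity: since $\mc{I}(\tau) = \tau$, the implicit commitments assigned to $\tau$ coincide with $\tau$ and prove no new theorems, so the resulting theory of implicit commitment is conservative over $\tau$. As with the reductions of \textsc{(inv)} in the basic framework and of the $\kpomega$-setting discussed above, the force of the lemma is diagnostic: \textsc{(ui)} in isolation places no lower bound on $\mc{I}(\tau)$, so any substantive implicit commitments must come from coupling it with a reflection principle, as pursued in the next section. I do not anticipate a genuine obstacle here; the single point calling for care is confirming that the identity is admissible as an interpretation of $\mc{I}$ and that the retract witnesses carry over verbatim, and both reduce to unwinding the definitions of retract and of $\vtl_r^{\mathtt{K},\mathtt{L}}$.
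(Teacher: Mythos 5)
Your proposal is correct and matches the paper's own justification exactly: the paper obtains this lemma precisely "by letting $\mc{I}(\tau)$ be $\tau$ itself," under which reading the consequent of \textsc{(ui)} is literally its antecedent and conservativity over $\tau$ is immediate. Nothing further is needed.
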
 
\textsc{Uniform invariance} is, however, still not enough. The problem of inconsistent implicit commitments resurfaces again.\footnote{The following lemma was already observed in \cite{vis06} as a corollary of a more general phenomenon.}
\begin{lemma}\label{lem:retncon}
$\pa+\neg\mathtt{Con}(\pa)$  is a retract of $\pa$.
\end{lemma}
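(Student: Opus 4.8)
The plan is to exhibit the two interpretations required by the definition of retract in Section~\ref{sec:thre} and, crucially, to construct the $T$-definable isomorphism witnessing $\sigma\circ\tau\cong\mathrm{id}_T$, where $T:=\pa+\neg\mathtt{Con}(\pa)$ and $W:=\pa$. Since $\pa$ interprets $\sot$, Lemma~\ref{lem:fefinc} (Feferman) directly supplies an interpretation $\tau\colon T\to W$: inside $\pa$ one defines, by the standard cut-shortening technique applied to the restricted-consistency cut $\{x : \mathtt{Con}(\dot{x},\pa)\}$, a definable cut $J$ closed under $0,\mathtt{S},+,\times$ on which $\pa$ verifies both all axioms of $\pa$ and $\neg\mathtt{Con}(\pa)$. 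This already settles mutual interpretability, the reverse direction $W\to T$ being immediate from $T\supseteq W$; the entire difficulty of the lemma lies in upgrading this to a retract.

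The first thing to record is \emph{why} the naive choice $\sigma:=$ inclusion cannot work. With that choice $\sigma\circ\tau$ is just $\tau$ read as a self-interpretation of $T$ on the cut $J$, and a $T$-definable isomorphism $I(x,y)$ onto the full domain would be order- and successor-preserving with $I(0,0)$; a routine induction inside $T$ (available since $T\supseteq\ea$) would then force $I$ to coincide with equality and $J$ to be all of the domain, contradicting that $J$ is a proper cut. Hence the isomorphism must be genuinely non-trivial, which means the round trip $\sigma\circ\tau$ has to reinterpret the arithmetical primitives $0^{\sigma\tau},\mathtt{S}^{\sigma\tau},\ldots$ by formulas distinct from the literal ones, so that the interpreted copy is isomorphic to---rather than literally equal to---the ambient model.

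Accordingly I would take $\sigma\colon W\to T$ to be a non-trivial cut interpretation of $\pa$ in $\pa+\neg\mathtt{Con}(\pa)$, chosen in lockstep with $\tau$ so that the composite $\sigma\circ\tau$ is a definable \emph{recoding} of the ambient model whose decoding is $T$-definable. The key resource is precisely that $T$ proves the $\Sigma_1$ sentence $\neg\mathtt{Con}(\pa)$: the (possibly non-standard) $\pa$-proof $c$ of $0=1$ guaranteed by this axiom is used to drive the recoding and to make the reinterpreted consistency statement come out false on the nose, so that no information is lost in the round trip. One then sets $I(x,y)$ to be the graph of this decoding and verifies, provably in $T$, the isomorphism clauses of Section~\ref{sec:thre}: totality and surjectivity between the two domains, functionality in both directions, and preservation of $0,\mathtt{S},+,\times$.

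The main obstacle is exactly this last verification: producing $\tau$, $\sigma$ and $I$ simultaneously so that \emph{all} the isomorphism clauses are $T$-provable, rather than merely true in the standard model. This is delicate because it must be carried out on definable cuts and must interact correctly with the inconsistency witness $c$. Rather than grind through it, I would invoke the general phenomenon established in \cite{vis06}---of which the present statement is a corollary---whereby, for sequential reflexive theories, mutual interpretability of this shape can be promoted to a retract: Lemma~\ref{lem:fefinc} supplies the one interpretation, reflexivity of $\pa$ supplies the requisite cuts, and the $\Sigma_1$ axiom $\neg\mathtt{Con}(\pa)$ supplies the witness that makes the round trip definably invertible.
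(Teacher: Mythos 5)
Your framing of the problem is right: mutual interpretability is trivial here (inclusion one way, Feferman's Lemma~\ref{lem:fefinc} the other), and the entire content of the lemma is the $T$-provable isomorphism between $\sigma\circ\tau$ and $\mathrm{id}_T$ for $T=\pa+\neg\mathtt{Con}(\pa)$. Your negative observation --- that pairing Feferman's cut interpretation with the inclusion cannot yield such an isomorphism, since a $T$-definable, $0$- and $\mathtt{S}$-preserving isomorphism onto a definable cut would by induction collapse to the identity --- is also correct. But from there the proposal has a genuine gap on two counts. First, the positive construction is never actually given: the ``definable recoding driven by the inconsistency witness $c$'' is a gesture, you acknowledge that verifying the isomorphism clauses is ``the main obstacle,'' and you then defer the whole verification to \cite{vis06}. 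Citing that paper is legitimate as attribution (the authors do so themselves in a footnote), but it does not constitute the proof you were asked for. Second, and more importantly, your diagnosis that ``the isomorphism must be genuinely non-trivial'' and that $\sigma\circ\tau$ ``has to reinterpret the arithmetical primitives'' by a recoding is a misreading of where the freedom lies: the correct move is not to complicate the isomorphism but to replace Feferman's cut interpretation by a different $\tau$ altogether.

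The paper's proof keeps $\sigma=\mathtt{id}$ (inclusion) and takes $\tau$ to be the \emph{disjunctive} interpretation
\[
\mathtt{I}(A):= (\mathtt{Con}(\pa)\land \mathtt{H}_{\pa+\neg\mathtt{Con}(\pa)}(A))\vee (\neg\mathtt{Con}(\pa)\land \mathtt{id}_\pa(A)),
\]
where $\mathtt{H}_{\pa+\neg\mathtt{Con}(\pa)}$ is the arithmetized-completeness (Henkin--Feferman) interpretation, available on the first disjunct because $\pa+\mathtt{Con}(\pa)\vdash \mathtt{Con}(\pa+\neg\mathtt{Con}(\pa))$ by formalized G\"odel II. From $\pa$'s point of view this is a correct interpretation of $\pa+\neg\mathtt{Con}(\pa)$ by cases on $\mathtt{Con}(\pa)$; but from the point of view of $T=\pa+\neg\mathtt{Con}(\pa)$, which proves $\neg\mathtt{Con}(\pa)$, the case split resolves to the second disjunct, so $P^{\mathtt{id}\circ\mathtt{I}}(\vec{x})\lra P(\vec{x})$ is provable in $T$ for every primitive $P$ and the domain is everything. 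The round trip is thus $T$-provably \emph{identical} to the identity interpretation, and the witnessing isomorphism is literally equality --- no cut, no recoding, no non-standard proof $c$ enters the argument. This is the idea your proposal is missing, and it is what makes the lemma a two-line construction rather than the delicate verification you anticipate.
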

\begin{proof}
    Let $\mathtt{H}_T\colon T\to \mathtt{PA}+\mathtt{Con}(T)$ be the Henkin-Feferman interpretation  given by the arithmetized completeness theorem (see e.g.~\cite[p.~77]{lin17}). Since, by G\"odel's second incompleteness theorem $\mathtt{Con}(\pa+\neg \mathtt{Con}(\pa))$ is derivable in $\pa+\mathtt{Con}(\pa)$,  one can define an interpretation
    \[
        \mathtt{I}(A):= (\mathtt{Con}(\pa)\land \mathtt{H}_{\pa+\neg\mathtt{Con}(\pa)}(A))\vee (\neg\mathtt{Con}(\pa)\land \mathtt{id}_\pa(A)).
    \]
    Reasoning in $\pa+\neg\mathtt{Con}(\pa)$, one can show that the trivial isomorphism given by identity gives, for any primitive $P(x)$ of $\lnat$, that 
    \[
          P(\vec{x})\lra P^{\mathtt{id}\circ \mathtt{I}}(\vec{x}).
    \]
\end{proof}
From Lemma \ref{lem:retncon} and \textsc{(uniform invariance)}, we require that
\beq
    \mc{I}(\pa)\vtl^{\mathtt{id},\mathtt{I}}_r \mc{I}(\pa+\neg \mathtt{Con}(\pa)). 
\eeq
However, we have seen that  $\mc{I}(\pa+\neg\mathtt{Con}(\pa))$ is bound to be inconsistent by \eqref{eq:inconnconpa}. Since retracts require mutual interpretability, again we are obtaining that $\mc{I}(\pa)$ is inconsistent.

It's clear what is going on all along: yes, we aim for a strict notion of ``sameness'' of commitments; yes, we require that the \emph{kinds} of interpretations of basic concepts are preserved from theories to commitments; but we also require that the interpretations involved \emph{agree on their basic domains of numbers/syntactic objects}. This is what fails to happen in  the case just considered. The interpretation $\mathtt{H}$, built in the interpretation $\mathtt{I}$ does not preserve the meaning of the natural numbers wrt to $\mathtt{id}$. 

We are led to a notion -- on which we finally settle -- of invariance that refines \textsc{uniform invariance} by requiring that the ontology of the natural numbers in the sense of our basic syntactic theory is preserved in the interpretations. For instance, in Example \ref{exa:ctaca}, the two theories include $\pa$, and the interpretation of $\lnat$ is just the identity interpretation for both $\mathtt{K}$ and $\mathtt{L}$. We generalize this scenario. 
\begin{definition}\label{dfn:adeint}
The interpretations $N\colon \sot \to \tau$,  $M\colon \sot \to \tau'$, $F \colon \tau \to \tau'$ and $G\colon \tau'\to \tau$ are \emph{adequate} if the following diagrams commute:
\medskip

\begin{minipage}[c]{.45\textwidth}
\[
\begin{tikzcd}
\tau \arrow[r, "F"] & \tau'\\
\sot \arrow[u, "N"] \arrow[ru, "M"]
\end{tikzcd}
\]
\end{minipage}
\begin{minipage}{.45\textwidth}
\[
\begin{tikzcd}
\tau' \arrow[r, "G"] & \tau\\
\sot \arrow[u, "M"] \arrow[ru, "N"]
\end{tikzcd}
\]
\end{minipage}

\noindent In other words, we are requiring that the interpretation $F\circ N$ is $\tau'$-provably \emph{identical} -- in the sense of \S\ref{sec:thre} -- to $M$, and symmetrically for, $\tau$, $G \circ M$, and $N$.\footnote{We conjecture that equality of interpretations can be liberalized to isomorphism of interpretations, in such a way that the results below will carry over to this more liberal setting. }

We write $\tau \btrl^{F,G}_r \tau'$ for: ` verifiably in $\sot$, $\tau'$ is a retract of $\tau$ witnessed by the adequate interpretations $F \colon \tau \to \tau'$ and $G\colon \tau'\to \tau$'. 
\end{definition}

Finally, we settle for the following notion of invariance, taking into accounts all the adjustments to the starting, naive intuition about sameness of commitments. 
\[
  \tag{\textsc{uniform $\nat$-invariance}} \tau \btrl^{F,G}_r \tau' \Ra \mc{I}(\tau)\vtl^{F,G}_r \mc{I}(\tau')
\]
As before, it's clear that \textsc{uniform $\nat$-invariance} does not rule out trivial implicit commitments $\tau=\mc{I}(\tau)$. 
\begin{lemma}
    \textsc{Uniform $\nat$-invariance} can be conservatively interpreted in any $\tau$ interpreting  $\sot$. 
\end{lemma}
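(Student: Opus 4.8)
The plan is to settle this by exactly the device used for \textsc{(inv)} in Section \ref{sec:basic} and for \textsc{(ui)} just above: read the operator $\mc{I}$ trivially, setting $\mc{I}(\sigma):=\sigma$ for every $\sigma$ interpreting $\sot$. Under this reading the implicit commitments of any theory coincide with the theory itself, so the conservativity half of the claim is immediate; as remarked earlier, since $\mc{I}(\sigma)$ does not enlarge the signature we phrase this as a reduction rather than a relative interpretation, and here the reduction is simply the identity. The only thing that genuinely needs checking is that this trivial $\mc{I}$ validates every instance of \textsc{uniform $\nat$-invariance}.

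To carry out that check, I would fix an arbitrary pair $\tau,\tau'$ together with adequate interpretations $F\colon\tau\to\tau'$ and $G\colon\tau'\to\tau$ satisfying $\tau\btrl^{F,G}_r\tau'$, and then unfold the consequent. With $\mc{I}(\tau)=\tau$ and $\mc{I}(\tau')=\tau'$ the required conclusion $\mc{I}(\tau)\vtl^{F,G}_r\mc{I}(\tau')$ becomes literally $\tau\vtl^{F,G}_r\tau'$, i.e.\ that $F$ and $G$ witness $\tau'$ as a retract of $\tau$. But this is already part of the hypothesis: by Definition \ref{dfn:adeint} the relation $\btrl^{F,G}_r$ is nothing but $\vtl^{F,G}_r$ strengthened by the two additional demands that the retract be established $\sot$-verifiably and that $F,G$ be adequate (that the triangles over $N$ and $M$ commute). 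Discarding those two demands leaves exactly $\vtl^{F,G}_r$, so the antecedent entails the consequent and the instance holds.

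Since the pair and the witnessing interpretations were arbitrary, every instance of \textsc{uniform $\nat$-invariance} is validated, which together with the conservativity observation completes the argument. I do not expect a real obstacle: the content of the lemma is precisely that invariance of this strengthened kind, taken in isolation, remains too weak to force any commitment beyond $\tau$, and the proof is designed to make that weakness transparent. The one point worth stating with care is the quantifier order — the principle must hold for all $(\tau,\tau')$ and all adequate $(F,G)$ under a single fixed choice of $\mc{I}$ — but the identity operator meets this uniformly because the implication $\btrl^{F,G}_r\Rightarrow\vtl^{F,G}_r$ holds pointwise and involves no further choices. In particular the reflexive instances $\tau\btrl^{\mathtt{id},\mathtt{id}}_r\tau$ reduce to $\tau\vtl^{\mathtt{id},\mathtt{id}}_r\tau$, which holds because the identity interpretation exhibits any theory as a retract of itself.
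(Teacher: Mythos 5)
Your proposal is correct and follows exactly the route the paper intends: the paper leaves this lemma without an explicit proof, pointing only to the trivial interpretation $\mc{I}(\sigma)=\sigma$ used for \textsc{(inv)} and \textsc{(ui)}, and your verification that $\btrl^{F,G}_r$ entails $\vtl^{F,G}_r$ (by discarding the $\sot$-verifiability and adequacy requirements) is precisely the missing check, carried out correctly. Nothing to add.
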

We now turn to the other component of  our framework for implicit commitment. 

\begin{remark}\label{rem:smooth1}
    We have chosen an asymmetric version of \textsc{uniform $\nat$-invariance}, whose left-hand side requires a $\sot$-provable retract relation, whereas its right hand side doesn't. This reflects the asymmetry already present in the original principle \textsc{(inv)}. We could require $\sot$-verifiability on both sides, and the results below will carry through. We will comment on this in due course. 
\end{remark}

\subsection{Reflection}

The principle \textsc{(ref)} studied in \cite{leni22} states that if our metatheory establishes that all numeral instances of $\vphi$ are in the axiom ``set'' $\tau$, then $\forall x \vphi$ is in the implicit commitments of $\tau$. As already emphasized, it's then clear that the meaningfulness of the principle rests on the numerals being part of the overall domain of quantifiers of the metatheory, of $\tau$, and of its implicit commitments. To generalize \textsc{(ref)}, we then need to make sure to preserve the idea that numerals ``make sense'' in the meta-theory, even though $\tau$ may be formulated in a language that does directly feature resources to name natural numbers. The previous discussion suggests an obvious solution and constraints substantially the space of possible proposals; in fact, we could only come up with one natural proposal, which we now describe. 

Since we are studying implicit commitments of theories $\tau$ for which there is an interpretation $N\colon \sot \to \tau$, it's natural to recover the required domain of numbers needed to formulate our generalization of \textsc{(ref)} via $N$.  The generalized principle of reflection is then:
\[
\tag{\textsc{generalized reflection}}\label{eq:ref2}
    \tau \vdash \forall x \coltyp N\, \tau^N(\corn{\vphi(\dot{x})})\Ra \forall x\coltyp N \,\vphi(x)\in \mc{I}(\tau)
\]
 It should be clear that, in \ref{eq:ref2} -- henceforth, \textsc{(gr)} -- the meaning of the G\"odel quotes -- and, more generally, of the arithmetized syntactic apparatus -- is provided via the interpretation $N$. 

Just like \textsc{(invariance)}, \textsc{(ref)}, and \textsc{(ui)}, the principle \textsc{(gr)} does not force any logical strength.
 \begin{lemma}
    For $\tau$ schematic and interpreting $\sot$, \textsc{(gr)} can be conservatively interpreted. 
 \end{lemma}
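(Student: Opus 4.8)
The plan is to reduce \textsc{(gr)} to $\tau$ exactly as \textsc{(ref)} was reduced in Section \ref{sec:basic}, only now relativizing the whole syntactic apparatus to $N\colon \sot \to \tau$ and routing the argument through $\mathtt{SC}[\tau]$, which was already set up $N$-relatively in Section \ref{sec:atot}. Concretely, I would \emph{define} the candidate commitments to be the smallest set forced in by \textsc{(gr)}, namely
\[
    \mc{I}(\tau) := \{\,\forall x\coltyp N\,\vphi(x)\sth \tau \vdash \forall x \coltyp N\, \tau^N(\corn{\vphi(\dot x)})\,\},
\]
and then show (a) that every member of $\mc{I}(\tau)$ is a theorem of $\mathtt{SC}[\tau]$, and (b) that $\mathtt{SC}[\tau]$ is conservative over $\tau$. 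Item (b) is immediate from Lemma \ref{lem:sccons}, using the hypothesis that $\tau$ is schematic; combining (a) and (b) shows that $\mc{I}(\tau)$ is conservative over $\tau$, which is the required conservative interpretation of the commitment operator (in the same deflationary sense used for \textsc{(ui)} and \textsc{uniform $\nat$-invariance}, with $\mc{I}(\tau)$ reducible to $\tau$).

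For (a), fix $\vphi$ with $\tau \vdash \forall x\coltyp N\, \tau^N(\corn{\vphi(\dot x)})$ and reason inside $\mathtt{SC}[\tau]$, which extends $\tau$. From $\tau$ we have $\forall x\coltyp N\, \tau^N(\corn{\vphi(\dot x)})$; so fixing an arbitrary $x\coltyp N$ we obtain $\tau^N(\corn{\vphi(\dot x)})$, whence the instance of the schema $(\tau\subseteq\T)$ for $\vphi$ yields $\T\corn{\vphi(\dot x)}$, and the $\mathtt{UTB}$-biconditional $\T\corn{\vphi(\dot x)}\lra \vphi(x)$ yields $\vphi(x)$. Since $x\coltyp N$ was arbitrary, $\mathtt{SC}[\tau]\vdash \forall x\coltyp N\,\vphi(x)$. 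This is just the $N$-relativized form of the observation in Section \ref{sec:basic} that $\mathtt{SC}[\tau]$ proves every member of the reflected set; the only change is that the metatheoretic witness for axiomhood is now $\tau$ itself (through $N$) rather than $\sot$, which is harmless precisely because $\mathtt{SC}[\tau]\supseteq \tau$.

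The conservativity step (b) is where the assumption that $\tau$ is schematic is indispensable: it is exactly this hypothesis that licenses the Tarskian replacement of $\T$ by the $N$-relativized partial truth predicate $\mf{T}$ in the proof of Lemma \ref{lem:sccons}. I expect the only genuine point requiring care to be bookkeeping, namely checking that the instance of $(\tau\subseteq\T)$ invoked in (a) is genuinely available for our $\vphi$, and that all the coding operations ($\corn{\cdot}$, substitution of the $N$-numeral $\dot x$, and the axiom predicate $\tau^N$) are read uniformly relative to $N$, so that the per-proof interpretation underlying Lemma \ref{lem:sccons} transfers verbatim. Crucially, no appeal to any invariance principle is made; this is the conceptual reason \textsc{(gr)} on its own carries no logical strength. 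By contrast, the strength extracted in the arithmetical setting (Proposition \ref{prop:basicleni}) appears only once reflection is combined with invariance and with a \emph{non-schematic} re-axiomatization of $\tau$, to which Lemma \ref{lem:sccons} no longer applies.
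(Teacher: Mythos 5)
Your proposal is correct and follows essentially the same route as the paper's own proof: define $\mc{I}(\tau)$ as the minimal set forced by \textsc{(gr)}, show it is contained in $\mathtt{SC}[\tau]$ via the schema $(\tau\subseteq\T)$ together with $\mathtt{UTB}$, and conclude by the conservativity of $\mathtt{SC}[\tau]$ over schematic $\tau$ (Lemma \ref{lem:sccons}). Your write-up merely spells out in more detail the $N$-relativized reasoning inside $\mathtt{SC}[\tau]$ that the paper leaves implicit.
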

 \begin{proof}
Let's fix an interpretation $N\colon \sot \to \tau$, and define
\[
    \mc{I}(\tau)= \{ \forall x \coltyp N \,\vphi \sth \tau \vdash \forall x \coltyp N \,\tau(\corn{\vphi(\dot{x})})\}.
\]
Let $\mathtt{SC}[\tau]$ be as above.
%
% the theory featuring, for each standard sentence $\vphi(x)$ of $\mc{L}_\tau$, the bi-conditional
% \[
%     \tag{$\mathtt{UTB}^N$} (\forall x:N)(\T\corn{\vphi(\dot{x})}\lra \vphi(x))
% \]
%
% just like the theory  $\mathtt{CT}[\tau]$ in Section \ref{sec:atot}, with coding relativized to $N$, but with  the clauses for atomic formulae $\mathtt{CT}P$ and the  universal quantifier  $\mathtt{CT}\forall$ are ``restricted'' to $N$ in the following way:
% \begin{align}
%    & \tag{$\mathtt{SC}P$} (\forall \vec{x}:N)(\T\corn{P^N\dot{\vec{x}}}\lra P^N\vec{x}),\;\;\text{for $P$ a primitive relation of the language of $\sot$}\\
%    &\tag{$\mathtt{SC}\forall$} (\forall \vec{x}:N)
% \end{align}
% %
% and the axiom \emph{schema} 
% \[
% \tag{$\tau \subseteq \T$} (\forall x:N)(\tau(\corn{\vphi(\dot{x})})\ra \T\corn{\vphi(\dot{x})})
% \]
 If $\tau \vdash (\forall x:N)\,\tau(\corn{\vphi(\dot{x})})$ for some $\vphi$, then by $\tau \subseteq \T$, also $\mathtt{SC}[\tau]\vdash (\forall x:N)\,\T(\corn{\vphi(\dot{x})})$ and therefore $\mathtt{SC}[\tau]\vdash (\forall x:N)\,\vphi(x)$. So, $\mc{I}(\tau)\subseteq \mathtt{SC}[\tau]$. 

The proof is completed by Lemma \ref{lem:sccons}.
%noticing that $\mathtt{SC}[\tau]$ is conservative over $\tau$\carlo{Isn't the usual compactness argument enough, without the EV stuff?}.

 \end{proof}

 We can finally define our generalized necessary conditions for implicit commitment. 
 \begin{definition}
    For $\sigma$ such that $N\colon \sot \to \sigma$, the collection $\mc{I}(\sigma)$ of its implicit commitments is constrained by the following: for $\tau,\tau'$  theories interpreting $\sot$:
    \begin{align}
     \tag{\textsc{uniform $\nat$-invariance}}& \tau \btrl^{F,G}_r \tau' \Ra \mc{I}(\tau)\vtl^{F,G}_r \mc{I}(\tau')\\
    \tag{\textsc{generalized reflection}}&
    \tau \vdash \forall x \coltyp N\, \tau^N(\corn{\vphi(\dot{x})})\Ra \forall x\coltyp N \,\vphi(x)\in \mc{I}(\tau)
    \end{align}
 \end{definition}

\noindent  In the following section, we study the main properties of generalized commitments.

 \subsection{Uniform $N$-reflection}\label{sec:uniref}

In Section \ref{sec:basic} we employed the Uniform Reflection principle for $\tau$ -- with $\tau$ a theory in the language of arithmetic -- both as a test of the non-trivial strength of our principles for implicit commitments, and as natural lower bound for them.  To test the adequacy of the generalized framework, we resort to a similar strategy. Since we are now considering theories that only interpret some arithmetical theory, we will consider the reflection principle
 \[
    (\forall x:N) (\mathtt{Prov}_\tau(\corn{\vphi(\dot{x})})\ra \vphi(x))
    \tag{$\mathtt{RFN}^N(\tau)$}
 \]
 $\mathtt{RFN}^N(\tau)$ is a schema, which ranges over all formulae $\vphi(v)$ of $\mc{L}_\tau$, but the initial universal quantifier is restricted to objects in the interpretation $N\colon \sot \to \tau$. 

 We first show that \textsc{generalized reflection} and \textsc{uniform $\nat$-invariance} have nontrivial logical strength. 
 \begin{proposition}\label{prop:rfnnin}
$\mc{I}(\tau)$ includes $\mathtt{RFN}^N(\tau)$.
 \end{proposition}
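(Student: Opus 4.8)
The plan is to mirror the proof of Proposition~\ref{prop:basicleni}, relativizing every syntactic notion through $N$ and replacing the appeal to p-time reducibility with an appeal to \textsc{uniform $\nat$-invariance}. The crucial observation is that the re-axiomatization used in that argument stays inside $\mc{L}_\tau$, so the interpretations witnessing the relevant retract relation can be taken to be the identity; the retract machinery then collapses back to plain inclusion of commitments.

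First I would fix an arbitrary $\vphi(v)\in\mc{L}_\tau$ and introduce the $\vphi$-dependent re-axiomatization
\[
    \tau'(x):\lra \tau(x)\vee \exists y\coltyp N\; x=\corn{\mathtt{Proof}_\tau(y_1,\vphi(y_2))\ra \vphi(y_2)},
\]
where $y$ codes the pair $(y_1,y_2)$ and all syntactic operations are understood via $N$. By construction each small-reflection sentence is literally an axiom of $\tau'$, so $\sot$ — and hence $\tau'$, which extends $\sot$ along $N$ — proves $\forall y\coltyp N\,{\tau'}^{N}(\corn{\mathtt{Proof}_\tau(y_1,\vphi(y_2))\ra \vphi(y_2)})$. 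Applying \textsc{generalized reflection} to $\tau'$ with $\psi(y):=\mathtt{Proof}_\tau(y_1,\vphi(y_2))\ra\vphi(y_2)$ then yields $(\forall y\coltyp N)(\mathtt{Proof}_\tau(y_1,\vphi(y_2))\ra\vphi(y_2))\in\mc{I}(\tau')$. Since $\sot$ (via $N$) proves this to be equivalent to $(\forall x\coltyp N)(\mathtt{Prov}_\tau(\corn{\vphi(\dot x)})\ra\vphi(x))$, the $\vphi$-instance of $\mathtt{RFN}^N(\tau)$ belongs to $\mc{I}(\tau')$.

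Next I would transfer this instance from $\mc{I}(\tau')$ to $\mc{I}(\tau)$. Because $\tau$ and $\tau'$ share the language $\mc{L}_\tau$ and the same interpretation $N\colon\sot\to\tau$ of the base arithmetic, I would take $F=G=\mathtt{id}$. These identity maps are adequate in the sense of Definition~\ref{dfn:adeint}, since $\mathtt{id}\circ N=N$ makes both triangles commute trivially. They also witness a retract: the inclusion $\tau\subseteq\tau'$ of axiom sets is immediate from the first disjunct of $\tau'$, while the converse $\tau'\subseteq\tau$ is precisely the small reflection principle $\sot\vdash\forall y\coltyp N\,\mathtt{Prov}^N_\tau(\corn{\mathtt{Proof}_\tau(y_1,\vphi(y_2))\ra\vphi(y_2)})$. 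Both proof-transformations are $\sot$-verifiable and $\mathtt{id}\circ\mathtt{id}=\mathtt{id}$ is trivially isomorphic to the identity, so we obtain $\tau\btrl^{\mathtt{id},\mathtt{id}}_r\tau'$. \textsc{Uniform $\nat$-invariance} then forces $\mc{I}(\tau)\vtl^{\mathtt{id},\mathtt{id}}_r\mc{I}(\tau')$; in particular its component $G=\mathtt{id}\colon\mc{I}(\tau')\to\mc{I}(\tau)$ is an interpretation, i.e.\ $\mc{I}(\tau')\subseteq\mc{I}(\tau)$. Hence the $\vphi$-instance of $\mathtt{RFN}^N(\tau)$ lies in $\mc{I}(\tau)$, and since $\vphi$ was arbitrary, $\mathtt{RFN}^N(\tau)\subseteq\mc{I}(\tau)$.

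The genuinely substantive ingredient, exactly as in the arithmetical case, is the $N$-relativized small reflection principle used to verify $\tau'\subseteq\tau$; everything else is bookkeeping about $N$-relativized syntax. The only new point to watch is that the passage to retracts could in principle demand nontrivial interpretations on the commitment side. The hard part is therefore recognizing that, by keeping the re-axiomatization inside $\mc{L}_\tau$ and reusing the same $N$, the retract relation is witnessed by identity interpretations, so that the conclusion $\mc{I}(\tau)\vtl^{\mathtt{id},\mathtt{id}}_r\mc{I}(\tau')$ degenerates to the inclusion $\mc{I}(\tau')\subseteq\mc{I}(\tau)$ required — which is exactly the behaviour the design of \textsc{uniform $\nat$-invariance} was meant to secure.
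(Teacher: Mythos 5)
Your proof is correct and follows essentially the same route as the paper: re-axiomatize $\tau$ by adding the $N$-relativized small-reflection instances, apply \textsc{generalized reflection} to $\tau'$, and then use the identity interpretations (adequate and witnessing the retract via the provable small reflection principle) together with \textsc{uniform $\nat$-invariance} to pull the reflection instances back into $\mc{I}(\tau)$. The only cosmetic difference is that the paper bounds the existential quantifier in $\tau'$ by $(\exists y\leq^N x)$ so that $\tau'$ remains a theory in the official ($\Delta_0^b$) sense, whereas you leave it unbounded; you should restore that bound, but it changes nothing in the argument.
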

 \begin{proof}\hfill
 We consider again the axiomatization $\tau'$ of $\tau$ by reflection, although this time we mean $\mathtt{RFN}^N(\tau)$. With the contextual information that $x:N$, 
    \[
        \tau'(x):\lra \tau(x)\vee (\exists y\leq^N x)\;x=^N\corn{\mathtt{Proof}_\tau(y_1,\vphi(y_2))\ra \vphi(y_2)}
    \]
\noindent By \textsc{generalized reflection}, $\mathtt{RFN}^N(\tau)\in \mc{I}(\tau')$. The identity interpretations, and the provability of the ``small reflection principle'' \eqref{eq:smaref} witness that $\tau'\btrl_r \tau$.\footnote{We are  omitting here reference to the identity interpretations.}  
 By \textsc{uniform $\nat$-invariance}, the same interpretation(s) witnesses that $\mc{I}(\tau')\vtl_r \mc{I}(\tau)$, so $\mathtt{RFN}^N(\tau)\in \mc{I}(\tau)$. 
 \end{proof}

 We now proceed with the natural lower bound for the strength of \textsc{generalized reflection} and \textsc{uniform $\nat$-invariance}
 
 %Let us put $\mc{I}_R(\tau):= \{$\mathtt{RFN}^N(\tau) \ \ : \}$
 
 \begin{proposition}\label{prop:natint}
    \textsc{Generalized reflection} and \textsc{uniform $\nat$-invariance} are satisfied by
    \[
        \mc{I}_{\mathtt{RFN}^N}(\tau)=\{\vphi \in \mc{L}_\tau \sth \tau+\mathtt{RFN}^N(\tau)\}.
    \]
 \end{proposition}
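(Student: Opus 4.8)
The goal is to verify that the explicitly defined collection $\mc{I}_{\mathtt{RFN}^N}(\tau)=\{\vphi \in \mc{L}_\tau \sth \tau+\mathtt{RFN}^N(\tau)\vdash \vphi\}$ satisfies both constraints of the generalized framework. This is the analogue, in the generalized setting, of the second proposition in Section~\ref{sec:basic}, so I expect the two halves to mirror the standard verification. The plan is to treat \textsc{generalized reflection} and \textsc{uniform $\nat$-invariance} separately, checking each directly against the definition of $\mc{I}_{\mathtt{RFN}^N}$.

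For \textsc{generalized reflection}: suppose $\tau \vdash (\forall x\coltyp N)\,\tau^N(\corn{\vphi(\dot{x})})$. First I would note that, by formalized $\Sigma_1$-completeness relativized along $N$, from $\tau \vdash (\forall x\coltyp N)\,\tau^N(\corn{\vphi(\dot{x})})$ we get $\tau \vdash (\forall x\coltyp N)\,\mathtt{Prov}_\tau(\corn{\vphi(\dot{x})})$, since $\tau$ verifiably proves each of its own axioms. Now $\mathtt{RFN}^N(\tau)$ contains precisely the instance $(\forall x\coltyp N)(\mathtt{Prov}_\tau(\corn{\vphi(\dot{x})})\ra \vphi(x))$. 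Combining this instance with the displayed provability statement inside $\tau+\mathtt{RFN}^N(\tau)$ yields $\tau+\mathtt{RFN}^N(\tau)\vdash (\forall x\coltyp N)\,\vphi(x)$, so $(\forall x\coltyp N)\,\vphi(x)\in \mc{I}_{\mathtt{RFN}^N}(\tau)$, as required. The only care needed here is that the numerals $\corn{\vphi(\dot x)}$ and the provability predicate are all read through the interpretation $N$, but this is exactly how $\mathtt{RFN}^N(\tau)$ is set up, so the matching is clean.

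For \textsc{uniform $\nat$-invariance}: assume $\tau \btrl^{F,G}_r \tau'$, i.e.\ $\sot$ verifiably proves that $\tau'$ is a retract of $\tau$ via adequate interpretations $F\colon\tau\to\tau'$, $G\colon\tau'\to\tau$. I must show $\mc{I}_{\mathtt{RFN}^N}(\tau)\vtl^{F,G}_r \mc{I}_{\mathtt{RFN}^N}(\tau')$. The strategy is to show that $F$ and $G$ already witness this retract relation between the two reflection-closed theories. The key observation is that retractions verified in $\sot$ are preserved under the passage from $\tau$ to $\tau+\mathtt{RFN}^N(\tau)$: because $F$ and $G$ are \emph{adequate}, they fix the $N$-ontology of numbers and syntax, so $F$ maps the interpreted proof predicate $\mathtt{Prov}_\tau$ to (a $\tau'$-provable equivalent of) $\mathtt{Prov}_\tau$ read inside $\tau'$, and hence sends each instance of $\mathtt{RFN}^N(\tau)$ to a theorem of $\tau'+\mathtt{RFN}^N(\tau')$ — using that $\tau\btrl_r\tau'$ gives a $\sot$-provable reduction of $\tau$-proofs to $\tau'$-proofs along the identity on the shared number domain. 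Thus $F$ lifts to an interpretation $\mc{I}_{\mathtt{RFN}^N}(\tau)\to \mc{I}_{\mathtt{RFN}^N}(\tau')$ and symmetrically $G$ lifts in the other direction, with the same $\tau$-definable isomorphism between $G\circ F$ and the identity carrying over (the isomorphism was already $\tau$-provable and survives the conservative-looking extension by reflection, since adequacy keeps the number domain fixed).

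**Main obstacle.**
The genuinely delicate step is the \textsc{uniform $\nat$-invariance} half, specifically verifying that the \emph{same} interpretations $F,G$ continue to witness the retract relation after closing under $N$-uniform reflection. The subtlety is that adding $\mathtt{RFN}^N(\tau)$ changes the theory, so one must check that $F$ sends each new reflection axiom of $\tau$ to something provable in $\tau'+\mathtt{RFN}^N(\tau')$, and that the isomorphism condition (the clauses (1)--(7) of \S\ref{sec:thre}) is preserved. I expect this to hinge essentially on adequacy: since $F\circ N$ is $\tau'$-provably identical to $M$ and the retract is verified in $\sot$, the translation $F$ commutes with the arithmetized provability predicate up to $\tau'$-provable equivalence, which is exactly what is needed to transport each instance of $\mathtt{RFN}^N(\tau)$ to a theorem on the $\tau'$-side. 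Making this commutation precise — i.e.\ that $\mathtt{Prov}_\tau^F$ is $\tau'$-provably equivalent to $\mathtt{Prov}_\tau$ computed over the shared $N$-numbers — is the crux; once it is in hand, the rest is bookkeeping that parallels the degenerate inclusion case of the original framework.
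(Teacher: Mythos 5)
Your proposal is correct and takes essentially the same route as the paper's own proof: the reflection half combines the $\sot$-provable fact that every axiom of $\tau$ is $\tau$-provable with the corresponding instance of $\mathtt{RFN}^N(\tau)$, and the invariance half lifts the given adequate interpretations to the reflection extensions by transporting the provability predicate along the $\sot$-verifiable retract and applying an instance of the target theory's $N$-reflection --- exactly the crux you identify. One small presentational point: the step from $\tau(\corn{\vphi(\dot{x})})$ to $\mathtt{Prov}_\tau(\corn{\vphi(\dot{x})})$ needs only the trivial one-line-proof fact $\sot\vdash \forall x(\tau(x)\ra\mathtt{Prov}_\tau(x))$, not formalized $\Sigma_1$-completeness, though your parenthetical already gives the right reason.
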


 \begin{proof}
    That $\mc{I}_\mathtt{RFN}$ is satisfied by \textsc{Generalized reflection} follows immediately, since obviously $\tau$ proves that $\forall x : N\; \tau(\corn{\vphi(\dot{x})})$ only if $\forall x :N \;\mathtt{Prov}_\tau(\corn{\vphi(\dot{x})})$.

    For \textsc{uniform $\nat$-invariance}, given the assumption $\tau'\btrl^{F,G}_r \tau$ and the nature of $F,G$, it suffices to check that $F,G$ are indeed ($N$-preserving) interpretations between $\mc{I}_{\mathtt{RFN}^N}(\tau)$ and $\mc{I}_{\mathtt{RFN}^N}(\tau')$. We verify only the case of $F$, since the argument for $G$ is symmetric. Reasoning in $\mc{I}_{\mathtt{RFN}^N}(\tau)$, we show the translation of an arbitrary instance of $\mathtt{RFN}^N(\tau')$. We assume, for $x:N$, $\mathtt{Prov}^F_{\tau'}(\corn{\vphi(\dot{x})})$, that is $\mathtt{Prov}_{\tau'}(\corn{\vphi(\dot{x})})$ by $N$-preservation; since $\tau$ interprets $\sot$ via $N$ and 
    \[
        \sot \vdash \forall x(\mathtt{Prov}_{\tau'}(\corn{\vphi(\dot{x})})\ra \mathtt{Prov}_{\tau}(\corn{\vphi^F(\dot{x})})),
    \]
    we have $\mathtt{Prov}_{\tau}(\corn{\vphi^F(\dot{x})})$. Then $\mathtt{RFN}^N(\tau)$ gives us the claim. 
 \end{proof}
 \begin{remark}
    Continuing on the theme introduced in Remark \ref{rem:smooth1}, Propositions \ref{prop:rfnnin} and \ref{prop:natint} still hold if we require $\sot$-verifiable retracts on both sides of \textsc{uniform $\nat$-invariance}. In fact, Proposition \ref{prop:rfnnin} can be verified in $\sot$. For Proposition \ref{prop:natint}, one problem is that in weak metatheories such as $\sot$ the $\tau$-provability of the translations of \emph{axioms} of $\tau'$ (axiom interpretability) may not entail the $\tau$-provability of the translations of \emph{theorems} of $\tau'$ (theorem interpretability) \cite[\S5]{vis91}. It is known that for a special kind of interpretations, \emph{smooth interpretations} axiom- and theorem-interpretability are equivalent (see again \cite{vis91}). Back to Proposition \ref{prop:natint}, the  assumption $\tau'\btrl^{F,G}_r \tau$ witnesses that $F,G$ are {smooth} as interpretations of $\mc{I}_{\mathtt{RFN}^N}(\tau')$ and $\mc{I}_{\mathtt{RFN}^N}(\tau)$ (provably in $\sot$). This guarantees that the axiom-interpretability between $\mc{I}_{\mathtt{RFN}^N}(\tau')$ and $\mc{I}_{\mathtt{RFN}^N}(\tau)$ provided in our proof transforms into theorem-interpretability. 
 \end{remark}

 \section{Further Work}

We conclude the paper with some questions and issues left open by our study. 

\begin{itemize}
    \item What is the exact strength of $\tau_\Gamma$ as defined in section \ref{sec:iterat}?
    \item A study of the notion of an autonomous progression of an implicit commitment operator and a comparison with the progression studied in section \ref{sec:iterat}.
    \item Is it possible to abstract away the role of $\sot$ in the notion of \emph{adequate interpretation} (cf.~Def.~\ref{dfn:adeint}), and reason in terms of an abstract syntactic structure?\footnote{We are grateful to Bartosz Wcis\l o for suggesting this.}
    \item  An investigation of the formalization and iteration of generalized implicit commitments from section \ref{sec:genict}, both simple and autonomous. 
\end{itemize}

Also a full philosophical assessment of the generalized picture is also required; a particularly pressing question concerns the role of notions of theoretical equivalence in the individuation of mathematical commitments.

\section*{Acknowledgments}
We thank Bartosz Wcis\l o for useful remarks and an anonymous referee for detailed and helpful comments. Mateusz \L{}e\l{}yk's research was supported by an NCN Maestro grant 2019/34/A/HS1/00399, “Epistemic and semantic commitments of foundational theories.” Carlo Nicolai's research was supported by the AHRC Research Grant H/V015516/1 Properties, Paradox, and Circularity. A New, Type-Free Account'. 
%==========

% There are also at least two interesting notions of axiomatic reflection in the sense of \cite{leni22}. 
% \beq\label{eq:ref1}
%     \ea \vdash \forall x \tau(\vphi(x))\Ra \forall x\in \delta_N\,\vphi(x)\in \mc{I}(\tau)
% \eeq
% \beq\label{eq:ref2}
%     \tau \vdash \forall x\in \delta_N\, \tau^N(\vphi(x))\Ra \forall x\in \delta_N \,\vphi(x)\in \mc{I}(\tau)
% \eeq
% \begin{obse}
%     Our assumption on $\tau$ entails that \eqref{eq:ref2} implies \eqref{eq:ref1}. The other direction would follow if we assumed the faithfulness of $N$. By a result of Visser \cite{vis05}, if $\tau$ is trustworthy, then the equivalence of   \eqref{eq:ref2} and \eqref{eq:ref1} is guaranteed. 
% \end{obse}

\bibliography{IC_bib}
\bibliographystyle{plain}

\end{document}